\newtheorem{theorem}{Theorem}[section]
\newtheorem{lemma}[theorem]{Lemma}
\newtheorem{corollary}[theorem]{Corollary}
\theoremstyle{definition}
\newtheorem{definition}[theorem]{Definition}
\newtheorem{remark}[theorem]{Remark}
\newtheorem{notation}[theorem]{Notation}
\def\x{\xi}
\def\p{\partial}
\def\W{\mathcal{W}}
\def\W{\mathcal{W}}
\newcommand{\rmnum}[1]{\romannumeral #1}
\newcommand{\Rmnum}[1]{\expandafter\@slowromancap\romannumeral #1@}
\begin{document}
\title {\textbf{Intersection numbers in the curve graph with a uniform constant}}
\author{Yohsuke Watanabe\thanks{The author was partially supported from  U.S. National Science Foundation grants DMS 1107452, 1107263,
1107367 ``RNMS: Geometric Structures and Representation Varieties'' (the GEAR Network).}}
\date{}
\maketitle
\begin{abstract}
We derive various inequalities involving the intersection number of the curves contained in geodesics and tight geodesics in the curve graph. 
While there already exist such inequalities on tight geodesics, our method applies in the setting of geodesics. Furthermore, the method gives inequalities with a uniform constant depending only on the topology of the surface.\end{abstract}



\section{Introduction}
Let $S_{g,n}$ be a compact surface of genus $g$ and $n$ boundary components. Throughout this paper, we assume that an isotopy is free unless otherwise specified and that curves are simple, closed, essential and not isotopic to $\partial(S)$. 
We recall the curve graph, $C(S)$ defined by Harvey \cite{HAR}. Suppose $\x(S)=3g+n-3\geq 1.$ The vertices are isotopy classes of curves and the edge between two vertices are realized by disjointness. We manipulate the definition of an edge for $\x(S)=1$; we put the edge between two vertices if they intersect once if $S=S_{1,1}$ and twice if $S=S_{0,4}$. The curve graph is a geodesic metric space with the usual graph metric (which assigns distance $1$ to each edge.), which we denote by $d_{S}$. 

\begin{definition} Let $x,y\in C(S)$ and $A,B\subseteq C(S)$.
\begin{itemize}
\item The intersection number between $x$ and $y$ is the minimal possible number of intersections between them up to isotopy, and we denote it by $i(x,y)$. We define $\displaystyle i(A,B):=\sum_{a\in A,b\in B}i(a,b).$

\item The distance between $x$ and $y$ is the length of a geodesic between $x$ and $y$, and we denote it by $d_{S}(x,y)$. We define $d_{S}(A,B):=diam_{C(S)}A\cup B .$

\item We say $A$ and $B$ fill $S$ if $i(c,A)>0$ or $i(c,B)>0$ for all $c\in C(S)$. If $\x(S)>1$, then $A$ and $B$ fill $S$ if and only if $d_{S}(A,B)\geq 3$. Lastly, we let $F(A,B)$ denote a regular neighborhood of $A\cup B$ in $S$.
\end{itemize}
\end{definition}

We recall the definition of tight (multi)geodesics defined by Masur--Minsky \cite{MM2}. Note that a tight geodesic always exists between any pair of curves \cite{MM2}.

\begin{definition}
\begin{itemize}
\item Suppose $\x(S)=1$. Every geodesic is defined to be a tight geodesic. 

\item Suppose $\x(S)>1$. 
A multicurve is a set of mutually disjoint curves in $S$. A multigeodesic is a sequence of multicurves $\{V_{i}\}$ such that $d_{S}(a,b)=|p-q|$ for all $a\in V_{p},b\in V_{q}$ and for all $p,q$. 
A tight multigeodesic is a multigeodesic $\{V_{i}\}$ such that $V_{i}=\partial (F(V_{i-1},V_{i+1}))$ for all $i$. 
Given $x,y\in C(S)$, a tight geodesic between $x$ and $y$ is a geodesic $\{v_{i}\}$ such that $v_{i}\in V_{i}$ for all $i$ where $\{V_{i}\}$ is a tight multigeodesic between $x$ and $y$. 
\end{itemize}
\end{definition}

In this paper, we study the intersection numbers of the curves which are contained in geodesics (Theorem \ref{E''}) and tight geodesics (Theorem \ref{E'}). We review some works related to this paper. For the rest of this paper, we let $g_{x,y}$ denote a (multi)geodesic between $x$ and $y$ in $C(S);$ we will always specify whether it is tight or not.

Shackleton showed

\begin{theorem}[{\cite[Theorem 1.1]{SHA}}]\label{tabi}
Suppose $\x(S)>1$. Let $x,y\in C(S)$ and $g_{x,y} = \{v_{i}\}$ be a tight multigeodesic such that $d_{S}(x,v_{i}) = i$ for all $i$. Let $F(n)=n \cdot T^{\lfloor 2 \log_{2} n \rfloor}$ where $T$ depends only on the surface. Then $ i(v_{p},y) \leq \underbrace{F\circ F\circ  \cdots \circ F}_{p \text{ many } F \text{'s} }(i(x,y))$ for all $p.$
\end{theorem}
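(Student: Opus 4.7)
The plan is to induct on $p$, reducing the theorem to a single-step estimate $i(v_1, y) \leq F(i(x, y))$ and then iterating. The reduction works because the tightness condition $V_i = \partial F(V_{i-1}, V_{i+1})$ is local, so every tail $\{V_i\}_{i=p}^n$ of a tight multigeodesic is itself a tight multigeodesic, now from $V_p$ to $y$. Assuming the single-step bound for all tight multigeodesics, I would apply it to this tail to obtain $i(v_{p+1}, y) \leq F(i(v_p, y))$; combined with the inductive hypothesis $i(v_p, y) \leq F^{(p)}(i(x,y))$ and the monotonicity of $F$, this yields $i(v_{p+1}, y) \leq F^{(p+1)}(i(x, y))$ and closes the induction, with the base case $p = 0$ being trivial.

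For the single-step bound, the starting point is the tightness relation $V_1 = \partial F(x, V_2)$, which realizes each $v_1 \in V_1$ as a boundary component of a regular neighborhood of $x \cup V_2$. A surgery count should then produce a surface-dependent constant $T$ with $i(v_1, y) \leq T\bigl(i(x, y) + i(V_2, y)\bigr)$, and analogous estimates hold at every step along the geodesic via the relations $V_i = \partial F(V_{i-1}, V_{i+1})$. Propagating these along the whole geodesic, or equivalently running the recursion from both ends using also $V_{n-1} = \partial F(V_{n-2}, y)$, inflates the intersection number with $y$ by a multiplicative factor of $T$ per step, yielding a cumulative factor of $T^{O(d_S(x, y))}$ on $i(x, y)$. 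Plugging in Hempel's distance estimate $d_S(x, y) \leq 2 \log_2 i(x, y) + 2$ converts this to $T^{O(\log_2 i(x, y))}$, which after careful bookkeeping of the absolute constants matches exactly the factor $T^{\lfloor 2 \log_2 n \rfloor}$ appearing in $F(n)$.

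The main obstacle I anticipate is keeping the iteration clean. The naive estimate at each step couples the bound for $V_i$ to both its neighbors on the geodesic, so a direct iteration does not obviously close: one needs $i(V_2, y)$ to bound $i(V_1, y)$, but bounding $i(V_2, y)$ through $V_2 = \partial F(V_1, V_3)$ brings back $i(V_1, y)$. The right way around this is probably to run the recursion from both ends of the geodesic and to bisect at the midpoint, halving the length at each major step so that both the length and the intersection data shrink in a controlled way; the $2$ in the exponent $\lfloor 2\log_2 n \rfloor$ is a fingerprint of this two-sided/bisecting structure. A second point needing care is extracting a uniformly bounded constant $T = T(S)$ from the surgery count, which requires a separate treatment of the low-complexity case $\xi(S) = 1$ where the definitions of edges and of tightness degenerate.
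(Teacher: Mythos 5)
The paper does not prove this theorem; it is quoted from Shackleton \cite{SHA} as background, so there is no in-paper proof to compare against. Evaluating your proposal on its own terms, the inductive skeleton (reduce to a one-step estimate and iterate along tails of the multigeodesic) is sensible and matches the shape of the conclusion, but the one-step estimate itself has a genuine gap. You correctly observe that tightness gives $i(V_i,y)\leq T\bigl(i(V_{i-1},y)+i(V_{i+1},y)\bigr)$ and that this couples each vertex to \emph{both} neighbours, so a straight iteration does not close: writing $a_i=i(V_i,y)$, the system $a_i\leq 2(a_{i-1}+a_{i+1})$ with $a_0=i(x,y)$ and $a_n=0$ reintroduces $a_1$ on the right with coefficient $>1$ already at length $4$, and is in fact under-determined (one may choose the interior $a_i$ arbitrarily large while respecting all the inequalities). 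Your proposed remedy, running the recursion from both ends and bisecting at the midpoint, does not repair this: knowing $a_{\lfloor n/2\rfloor}$ (which you would still have to bound) imposes no upper bound on the other interior $a_i$, because the inequalities only propagate upper bounds \emph{outward} and lower bounds \emph{inward}. The factor $\lfloor 2\log_2 n\rfloor$ is not a fingerprint of a bisection scheme; it comes from Hempel's bound $d_S(x,y)\leq 2\log_2 i(x,y)+2$, which you already invoke at the end. The actual content of Shackleton's argument is exactly the mechanism that makes the recursion terminate, and that mechanism is absent from your sketch.

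Two smaller points. The hypothesis is $\xi(S)>1$, so the separate treatment of $\xi(S)=1$ you flag is not needed, and the surgery constant coming from $\partial F(\,\cdot\,,\,\cdot\,)$ is universal (the boundary of a regular neighbourhood of two curves meets a third in at most twice as many points), so no surface-dependent count arises at that step. Also, when you pass to the tail $\{V_i\}_{i\geq p}$, the initial vertex of the tail is the \emph{multicurve} $V_p$; replacing $V_p$ by a single curve $v_p\in V_p$ does not preserve the tightness condition $V_{p+1}=\partial F(V_p,V_{p+2})$, so the restriction is not immediately a tight multigeodesic from the curve $v_p$ to $y$. This is fixable (one can phrase the one-step lemma for multicurve endpoints), but it needs to be said.
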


The author showed

\begin{theorem}[{\cite[Theorem 1.6]{W3}}]\label{tabi2}
In Theorem \ref{tabi}, $F(n)$ can be replaced by a linear function $F(n)=R\cdot n$ where $R>1$ depends only on the surface, and we have $i(v_{p},y) \leq R^{p}\cdot i(x,y)$ for all $p.$ 
\end{theorem}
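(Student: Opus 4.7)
The plan is to chain a uniform one-step multiplicative bound
\[
i(V_p, y) \;\leq\; R \cdot i(V_{p-1}, y)
\]
along the tight multigeodesic, which iterated from $V_0 = \{x\}$ gives $i(V_p, y) \leq R^p \cdot i(x, y)$, hence the theorem since $v_p \in V_p$. The point is that such a linear one-step bound, rather than Shackleton's super-linear $F$, is what is needed to convert the inductive estimate into a clean exponential. The constant $R$ must ultimately depend only on $\xi(S)$, so every estimate has to be made in terms of the topology of $S$.

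The starting point is the tightness identity $V_p = \partial F(V_{p-1}, V_{p+1})$, together with the following ``boundary intersection lemma'': for any multicurves $A$ and $B$ on $S$ and any curve $c$,
\[
i\bigl(\partial F(A, B), c\bigr) \;\leq\; K\cdot \bigl(i(A, c) + i(B, c)\bigr),
\]
where $K$ is an absolute constant. I would prove this by a direct surgery argument: the components of $\partial F(A,B)$ are obtained by smoothing the graph $A\cup B$ at each self-intersection, so each strand of $\partial F(A,B)$ runs parallel to a subarc of $A$ or $B$, and each transverse crossing with $c$ is accounted for by a crossing of $c$ with $A$ or $B$, up to the factor $K$ absorbing the two sides of a regular neighborhood and any isotopy reductions. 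Applied at each interior vertex of the tight multigeodesic, this yields the symmetric bound
\[
i(V_p, y) \;\leq\; K \bigl( i(V_{p-1}, y) + i(V_{p+1}, y) \bigr).
\]

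The main work is to convert this symmetric bound into the desired unilateral inequality with a constant $R$ depending only on $\xi(S)$. My plan is to combine the symmetric recurrence with the terminal condition $i(V_n, y) = 0$ via an extremal argument: if $i(V_p, y)$ were to jump by a factor larger than some $R = R(K)$ at some step $p$, then the recurrence together with nonnegativity of all terms would force $i(V_{p+1}, y), i(V_{p+2}, y), \ldots$ to remain uniformly large, contradicting the boundary condition at $V_n$. Making this precise produces an explicit $R$ depending only on $K$, and hence only on $\xi(S)$.

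The principal obstacle is precisely this quantitative extraction: a naive manipulation of the symmetric inequality yields only a \emph{lower} bound on $i(V_{p+1}, y)$ in terms of the previous two terms, and to flip this into a uniform upper bound on the growth rate one must synthesize the local tightness structure with the global endpoint condition at $y$ in a way that is independent of $p$, of $n$, and of $i(x,y)$. This delicate global-to-local passage is the heart of the argument, and is exactly where the improvement over Shackleton's super-linear $F$ is realized.
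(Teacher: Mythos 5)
Your reduction to the symmetric recurrence $a_p \leq K\,(a_{p-1} + a_{p+1})$ with $a_p = i(V_p,y)$ is fine as far as it goes (the regular-neighborhood surgery does yield such a bound with $K$ around $2$), but the extremal argument you sketch to upgrade it to the one-sided bound $a_p \leq R^p a_0$ cannot work: the recurrence, even with the terminal condition $a_n = 0$, is simply too weak once $K \geq 1$. Take $a_0 = 1$, $a_1 = M$, $a_2 = M/K$, and $a_p = 0$ for $p \geq 3$. One checks that $a_p \leq K(a_{p-1}+a_{p+1})$ holds for every $p \geq 1$ whenever $K \geq 1$ (the only nontrivial case is $p=2$, where $M/K \leq KM$), yet $a_1/a_0 = M$ is unbounded. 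The mechanism you invoke — ``a large jump forces large values to propagate to $V_n$'' — fails because the rewritten inequality $a_{p+1} \geq a_p/K - a_{p-1}$ is vacuous as soon as $a_{p-1}$ is comparable to $a_p/K$, which is exactly what happens two steps after a jump. Since any boundary intersection lemma of the form you state necessarily has $K \geq 1$ (a curve crossing $A$ once must enter and exit the neighborhood $F(A,B)$, so already $i(\partial F(A,B),c) \geq 2\,i(A,c)$ is possible), you cannot repair this by sharpening the constant.

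The theorem is real, but its proof in \cite{W3} (and Shackleton's antecedent) is of a fundamentally different nature: it goes through subsurface projections. Roughly, one bounds $\log i(v_p,y)$ by a sum of cut-off projection distances $\sum_Z [d_Z(v_p,y)]_n + \sum_A \log[d_A(v_p,y)]_n$, and then uses the Bounded Geodesic Image Theorem together with the tightness/filling property (the ancestor of Lemma~\ref{sss} in this paper) to compare the projection profile of $(v_p,y)$ to that of $(x,y)$. The local combinatorics of $V_p = \partial F(V_{p-1},V_{p+1})$ on its own, without the global rigidity that subsurface projections impose along a geodesic, does not control intersection numbers nearly well enough, which is precisely what the counterexample above demonstrates.
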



We note that the final constants which appeared in the inequalities in Theorem \ref{tabi} and Theorem \ref{tabi2} depend not only on the surface but also on $d_{S}(x,v_{p})$ and $d_{S}(x,y)$. For instance, if $h=\big\lfloor \frac{d_{S}(x,y)}{2} \big \rfloor$, then in Theorem \ref{tabi2} we have $i(v_{h},y) \leq R^{h}\cdot i(x,y).$ In particular, if $d_{S}(x,y) \rightarrow \infty$ then $R^{h}\rightarrow \infty.$
Our main contribution of this paper is to overcome this issue; we derive an inequality on tight geodesics, where the constant depends only on the surface. Furthermore, this technique applies to obtain a such inequality in the setting of geodesics. 
Lastly, we remark that Theorem \ref{tabi2} also holds when $\x(S)=1$ with $R\leq1$, see \cite{W3}. Hence, for the rest of this paper, we always assume $\x(S)>1.$

We show 
\begin{theorem}\label{E'}
Let $x,y\in C(S)$ and $g_{x,y} = \{v_{i}\}$ be a tight geodesic such that $d_{S}(x,v_{i}) = i$ for all $i$. There exists $U$ depending only on $S$ such that 
$i(v_{p},v_{q}) \leq i(x,y)^{U}$ for all $p,q$.
\end{theorem}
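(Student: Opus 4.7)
The approach is to combine Theorem \ref{tabi2} with the classical logarithmic bound of Hempel, namely that $d_{S}(x,y) \leq 2\log_{2} i(x,y) + 2$ whenever $i(x,y)\geq 1$. Theorem \ref{tabi2} gives a bound linear in the intersection number but with an exponential constant in the distance $d = d_{S}(x,y)$; the point is that this distance is itself logarithmically controlled by $i(x,y)$, which will convert the exponential-in-distance constant into a polynomial-in-intersection constant.

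I first dispatch the trivial regime $d \leq 2$: in that case every $v_{p}, v_{q}$ is either an endpoint of the geodesic or disjoint from one, so $i(v_{p}, v_{q}) \leq i(x,y)$ and any $U \geq 1$ works (including the degenerate case $i(x,y)=0$). Assume therefore $d \geq 3$, which forces $i(x,y) \geq 2$ since filling curves must intersect at least twice on non-exceptional surfaces. Now I use the fact that sub-geodesics of a tight geodesic are themselves tight: the Masur--Minsky tightness condition $V_{i} = \partial F(V_{i-1}, V_{i+1})$ is imposed only at interior indices and is clearly preserved under restriction to a subsegment. Fixing $p \leq q$ (which is WLOG by symmetry of $i(\cdot,\cdot)$), I apply Theorem \ref{tabi2} to two sub-geodesics. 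First, to the tight sub-geodesic from $x$ to $v_{q}$, where $v_{p}$ sits at index $p$ from $x$: this yields $i(v_{p}, v_{q}) \leq R^{p} \cdot i(x, v_{q})$. Second, to the reversed tight geodesic from $y$ back to $x$, where $v_{q}$ sits at index $d-q$ from $y$: this yields $i(v_{q}, x) \leq R^{d-q} \cdot i(x, y)$. Multiplying the two,
\[
i(v_{p}, v_{q}) \;\leq\; R^{p + (d-q)} \cdot i(x,y) \;\leq\; R^{d} \cdot i(x,y).
\]

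It remains to absorb $R^{d}$ using Hempel's bound: $R^{d} \leq R^{2\log_{2} i(x,y) + 2} = R^{2} \cdot i(x,y)^{2\log_{2} R}$, so $i(v_{p}, v_{q}) \leq R^{2} \cdot i(x,y)^{1 + 2\log_{2} R}$. Since $i(x,y) \geq 2$, the factor $R^{2}$ is itself bounded by $i(x,y)^{2\log_{2} R}$, giving $i(v_{p}, v_{q}) \leq i(x,y)^{1 + 4\log_{2} R}$, so $U = 1 + 4\log_{2} R$ depends only on $S$ through the constant $R$ of Theorem \ref{tabi2}. The conceptual heart of the argument is the observation that Hempel's logarithmic inequality exactly cancels the exponential-in-distance growth of Theorem \ref{tabi2}; the only technical care required is the preservation of tightness under restriction to subsegments, so that Theorem \ref{tabi2} can be applied to the two overlapping sub-geodesics whose bounds combine to control $i(v_{p}, v_{q})$.
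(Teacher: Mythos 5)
Your approach is genuinely different from the paper's. The paper proves Theorem \ref{E'} by combining Lemma \ref{sss} (a tightness/BGIT argument that ``transfers'' large subsurface projections of $v_{p},v_{q}$ to $x,y$) with Theorem \ref{E}, the Choi--Rafi distance formula for curves; you bypass that machinery entirely, instead combining Theorem \ref{tabi2} with Hempel's logarithmic bound $d_{S}(x,y)\leq 2\log_{2}i(x,y)+2$. The conceptual observation that Hempel's bound exactly cancels the $R^{d}$ growth is genuinely nice and, if it worked, would give a considerably more elementary proof of the theorem.

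However, there is a real gap. You assert that sub-geodesics of a tight geodesic are themselves tight, on the grounds that the tightness condition $V_{i}=\partial F(V_{i-1},V_{i+1})$ is only imposed at interior indices and is ``clearly preserved under restriction.'' This is too fast. A tight geodesic $\{v_{i}\}$ is by definition a choice $v_{i}\in V_{i}$ where $\{V_{i}\}$ is a tight \emph{multi}geodesic with singleton endpoints $V_{0}=\{x\}$, $V_{d}=\{y\}$. Restricting to $\{V_{i}\}_{i=0}^{q}$ preserves the tightness condition at interior indices, but the right-hand endpoint is now the multicurve $V_{q}$, which may strictly contain $\{v_{q}\}$. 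To invoke Theorem \ref{tabi2} on a tight geodesic ``from $x$ to $v_{q}$'' you would need a tight multigeodesic $\{W_{i}\}_{i=0}^{q}$ with $W_{q}=\{v_{q}\}$ and $v_{i}\in W_{i}$. The obvious candidate $W_{i}=V_{i}$ for $i<q$, $W_{q}=\{v_{q}\}$, fails at index $q-1$: we only know $V_{q-1}=\partial F(V_{q-2},V_{q})$, and replacing $V_{q}$ by the smaller set $\{v_{q}\}$ can change the filling neighborhood and hence its boundary, so the tightness equation $V_{q-1}=\partial F(V_{q-2},\{v_{q}\})$ need not hold. Theorem \ref{tabi2} as stated applies only to curves $x,y\in C(S)$, so the first of your two applications (the one yielding $i(v_{p},v_{q})\leq R^{p}i(x,v_{q})$) is not justified as written.

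The rest of the argument is fine: the dispatch of $d_{S}(x,y)\leq 2$, the observation that $d_{S}(x,y)\geq 3$ forces $i(x,y)\geq 2$ (an Euler-characteristic count, valid since $\xi(S)>1$), the second application of Theorem \ref{tabi2} to the reversed geodesic (which really is tight), and the arithmetic absorbing $R^{2}$ into $i(x,y)^{2\log_{2}R}$. To close the gap you would need either (a) to show that Theorem \ref{tabi2}'s proof in \cite{W3} goes through when the terminal term of the tight multigeodesic is a multicurve — in which case you can bound $i(V_{p},V_{q})\geq i(v_{p},v_{q})$ and conclude — or (b) a separate argument establishing the subsegment-tightness claim, which I do not believe is available in the form you use it. As it stands, this step needs more than the one-sentence assertion you give it.
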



We notice that Theorem \ref{E'} is analogous to an obvious inequality involving the distance of the curves contained in a geodesic: $$d_{S}(v_{p},v_{q})\leq d_{S}(x,y)\text{ for all }p,q.$$ In other words, the intersection number of the curves contained in a tight geodesic behaves like the distance of the curves contained in a geodesic with an extra uniform constant on its exponent.
We show a converse version, Corollary \ref{Direct}, which is analogous to an another obvious inequality involving the distance of the curves contained in a geodesic: $$d_{S}(x,y) \leq d_{S}(x,v_{p})+d_{S}(v_{p},y)\text{ for all }p.$$

Corollary \ref{Direct} directly follows from the following theorem.

 \begin{theorem}\label{E''}
Let $x,y\in C(S)$ and $g_{x,y} = \{v_{i}\}$ be a geodesic such that $d_{S}(x,v_{i}) = i$ for all $i$. There exists $V$ depending only on $S$ such that $ i(x,y)\leq \big( i(x,v_{p})\cdot i(v_{p},y)\cdot i(x,v_{q})\cdot i(v_{q},y)\big)^{V}$ for all $p,q$ such that $|p-q|>2$. (We let $i(x,v_{1})=1$ and $i(v_{d_{S}(x,y)-1},y)=1$.)
\end{theorem}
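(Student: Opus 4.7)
The hypothesis $|p-q|>2$ forces $d_{S}(v_{p},v_{q})\geq 3$, so by the filling criterion recalled in the introduction $v_{p}$ and $v_{q}$ fill $S$: the connected components of $S\setminus(v_{p}\cup v_{q})$ are topological disks together with at most one peripheral annulus per component of $\p S$. My plan is to compute $i(x,y)$ by counting intersections of $x$ and $y$ region-by-region after realizing $x,y,v_{p},v_{q}$ in simultaneous pairwise minimal position on $S$.

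Cutting $x$ transversely along $v_{p}\cup v_{q}$ produces exactly $i(x,v_{p})+i(x,v_{q})$ arcs, each contained in a single complementary region, and similarly $y$ produces $i(y,v_{p})+i(y,v_{q})$ arcs. In each complementary \emph{disk} $D$ containing $a_{D}$ arcs of $x$ and $b_{D}$ arcs of $y$, two simple arcs in minimal position intersect at most once (otherwise they would bound a bigon, contradicting minimal position of $x$ and $y$ on $S$), so the intersections of $x$ and $y$ inside $D$ number at most $a_{D}b_{D}$. Summing over the disks and absorbing a bounded extra contribution $C=C(S)$ coming from the (topology-dependent) peripheral annular regions,
\[
i(x,y)\;\leq\; C\cdot\Bigl(\sum_{D}a_{D}\Bigr)\Bigl(\sum_{D}b_{D}\Bigr)\;\leq\; C\bigl(i(x,v_{p})+i(x,v_{q})\bigr)\bigl(i(y,v_{p})+i(y,v_{q})\bigr).
\]

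Expanding the right-hand side yields four cross terms $i(x,v_{r})\,i(y,v_{s})$ with $r,s\in\{p,q\}$. Using the conventions $i(x,v_{1})=1$ and $i(v_{d_{S}(x,y)-1},y)=1$ together with the fact that each of $i(x,v_{p}),\,i(v_{p},y),\,i(x,v_{q}),\,i(v_{q},y)$ is $\geq 1$, each cross term is bounded by $P:=i(x,v_{p})\,i(v_{p},y)\,i(x,v_{q})\,i(v_{q},y)$, so $i(x,y)\leq 4C\cdot P$. A short case analysis (using that $i(\alpha,\beta)=1$ forces $d_{S}(\alpha,\beta)\leq 2$, hence $p,q\in\{1,2,d_S(x,y)-2,d_S(x,y)-1\}$) rules out $P=1$ under $|p-q|>2$, so in fact $P\geq 2$ and $4CP\leq P^{V}$ for any $V\geq 1+\log_{2}(4C(S))$. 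This yields the claimed inequality with $V$ depending only on $S$.

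The main obstacle is the contribution of the peripheral annular regions: arcs of $x$ and $y$ there can spiral around $\p S$, and two oppositely-spiraling arcs may intersect many more times than the one-per-pair bound used in the disk case. Establishing that this contribution is nonetheless controlled by a constant depending only on the topology of $S$ (and not on $x,y,v_{p},v_{q}$) requires a separate finite-type argument per annulus, essentially bounding winding numbers of $x$- and $y$-arcs via the structure of their endpoints on $v_{p}\cup v_{q}$. Once this annular bookkeeping is in place, the remainder of the argument is the elementary combinatorial estimate and exponentiation step sketched above.
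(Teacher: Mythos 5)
Your approach is genuinely different from the paper's.  The paper proves Theorem~\ref{E''} via subsurface projections: it defines $\W^{i}=\{W\subseteq S\ :\ \pi_{W}(x),\pi_{W}(y),\pi_{W}(v_{i})\neq\emptyset\}$, uses the triangle inequality on $d_{W}(x,y)\leq d_{W}(x,v_{p})+d_{W}(v_{p},y)$ to split the cut--off sums over $\W^{p}$ and $\W^{q}$, invokes the filling of $v_{p},v_{q}$ to ensure $\W^{p}\cup\W^{q}$ covers all relevant subsurfaces, and then applies the Choi--Rafi formula for curves (Theorem~\ref{E}).  You instead realize all four curves in pairwise minimal position and count $x\cap y$ region--by--region in $S\setminus(v_{p}\cup v_{q})$; this is elementary and, if completed, would give a \emph{polynomial} (in fact linear) bound $i(x,y)\leq 4C\cdot i(x,v_{p})i(v_{p},y)i(x,v_{q})i(v_{q},y)$, which is strictly stronger than the exponential bound $i(x,y)\leq P^{V}$ produced by the Choi--Rafi route (where the exponent comes from the multiplicative error in that coarse formula).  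Your $P\geq 2$ reduction, using $i(\cdot,\cdot)=1\Rightarrow d_{S}\leq 2$ and hence $p,q\in\{1,2,d-2,d-1\}$, is correct.

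There is one gap, but it is smaller than you suggest.  You flag the peripheral annular regions as ``the main obstacle,'' worrying that arcs of $x$ and $y$ there can spiral around $\partial S$.  They cannot: in such a region $A$, the arcs of $x$ and $y$ have \emph{both} endpoints on the polygonal side $c_{0}\subset v_{p}\cup v_{q}$ (since $x$ and $y$ are disjoint from $\partial S$), and a properly embedded simple arc in an annulus with both endpoints on the same boundary component cannot wind a full turn around the core---lifting to the strip $\widetilde A=\mathbb{R}\times[0,1]$, an arc $\tilde\alpha$ spanning a horizontal interval of length $\geq L$ (the translation length) would have interleaving endpoints with its translate $\tilde\alpha+L$, forcing a self--intersection.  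With this in hand, a further universal--cover count shows two such arcs in minimal position meet at most twice: a lift $\tilde\beta$ interleaves with at most two translates of $\tilde\alpha$ (one per endpoint of $\tilde\beta$ falling in a fundamental domain), and minimal position on $S$ rules out bigons in the strip.  So the annular regions contribute at most $2a_{A}b_{A}$, just like (twice) the disk bound, and $C=2$ works uniformly.  Filling in this observation would complete your proof; the ``separate finite--type argument per annulus, bounding winding numbers via the structure of endpoints'' you propose is not needed.  (One small additional point worth making explicit: simultaneous pairwise minimal position of $x,y,v_{p},v_{q}$ is achieved by taking geodesic representatives in a fixed hyperbolic metric, and a bigon between arcs inside a complementary region is a bigon between $x$ and $y$ on $S$, so the one--per--pair disk bound is legitimate.)
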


We note that $g_{x,y}$ does not have to be tight and the length of $g_{x,y}$ needs to be at least 5 in the above. Also, we let $i(x,v_{1})=1$ and $i(v_{d_{S}(x,y)-1},y)=1$ even though these intersection numbers are $0$; in Remark \ref{hkai}, we explain the reason that the above theorem also holds with this modification.

In the rest of this section, $U$ and $V$ will denote the constants given in Theorem \ref{E'} and Theorem \ref{E''} respectively. 

By Theorem \ref{E''}, we have 
\begin{corollary}\label{Direct}
Let $x,y\in C(S)$ and $g_{x,y} = \{v_{i}\}$ be a geodesic such that $d_{S}(x,v_{i}) = i$ for all $i$. Except for at most $3$ consecutive vertices of $g_{x,y}$, we have $i(x,y)\leq \big( i(x,v_{p})\cdot i(v_{p},y)\big)^{2V}$. 
\end{corollary}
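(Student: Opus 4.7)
The plan is to read Corollary \ref{Direct} directly off Theorem \ref{E''} by a contrapositive. Write $n := d_{S}(x,y)$ and set $A_p := i(x,v_p)\cdot i(v_p,y)$ for $p \in \{1,\ldots,n-1\}$; the conventions $i(x,v_1)=1$ and $i(v_{n-1},y)=1$ supplied in the excerpt guarantee $A_p \geq 1$ throughout this range. Call an index $p$ \emph{exceptional} when $i(x,y) > A_p^{2V}$. The corollary amounts to the claim that the set of exceptional indices sits inside some block $\{k,k+1,k+2\}$ of three consecutive integers.

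Assume for contradiction that $p < q$ are both exceptional with $q - p > 2$. Theorem \ref{E''} applies and yields $i(x,y) \leq (A_p\cdot A_q)^{V} = A_p^{V} A_q^{V}$, while exceptionality gives $A_p^{2V} < i(x,y)$ and $A_q^{2V} < i(x,y)$. Chaining the first with Theorem \ref{E''} gives $A_p^{2V} < A_p^{V} A_q^{V}$, and dividing by the strictly positive $A_p^{V}$ produces $A_p < A_q$. Running the symmetric computation on $q$ gives $A_q < A_p$, a contradiction. Hence any two exceptional indices are within distance $2$ of each other, i.e.\ they are contained in a single window of three consecutive integers, which is exactly the conclusion of the corollary.

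I expect no genuine obstacle here: the whole argument is a single contrapositive use of Theorem \ref{E''} combined with elementary exponent bookkeeping — the factor of $2$ in the exponent $2V$ is precisely the slack needed so that two exceptional indices can be ``multiplied together'' against Theorem \ref{E''}. The only bit of care required is the positivity $A_p>0$ that justifies cancelling $A_p^{V}$ in the displayed inequality, and this is exactly what the endpoint conventions $i(x,v_1)=i(v_{n-1},y)=1$ are engineered to ensure (and why the corollary is naturally phrased in terms of internal vertices rather than the genuine endpoints $v_0,v_n$, where both intersection numbers $i(x,v_p)\cdot i(v_p,y)$ degenerate to zero).
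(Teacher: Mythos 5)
Your proof is correct and takes essentially the same approach as the paper's. The paper argues directly — from $i(x,y)\le (A_pA_q)^V$ and the fact that $\max(A_p,A_q)^2\ge A_pA_q$, at least one of $i(x,y)\le A_p^{2V}$ or $i(x,y)\le A_q^{2V}$ holds for each pair with $|p-q|>2$ — whereas you phrase the same dichotomy as a contradiction between two exceptional indices; these are contrapositives of one another, with the same single use of Theorem~\ref{E''} and the same elementary exponent bookkeeping.
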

\begin{proof}
Since $ i(x,y)\leq \big( i(x,v_{p})\cdot i(v_{p},y)\cdot i(x,v_{q})\cdot i(v_{q},y)\big)^{V}$ for all $p,q$ such that $|p-q|>2$ by Theorem \ref{E''}, we have $ i(x,y)\leq \big( i(x,v_{p})\cdot i(v_{p},y)\big)^{2V}$ or $ i(x,y)\leq \big( i(x,v_{q})\cdot i(v_{q},y)\big)^{2V}$ for all $p,q$ such that $|p-q|>2$. 
\end{proof}
With Theorem \ref{E'}, Theorem \ref{E''} and Corollary \ref{Direct}, we have

\begin{corollary}\label{E'''}
Let $x,y\in C(S)$ and $g_{x,y} = \{v_{i}\}$ be a tight geodesic such that $d_{S}(x,v_{i}) = i$ for all $i$. We have $$ \sqrt[4U]{i(x,v_{p})\cdot i(v_{p},y)\cdot i(x,v_{q})\cdot i(v_{q},y)} \leq i(x,y)\leq   \big( i(x,v_{p})\cdot i(v_{p},y)\cdot i(x,v_{q})\cdot i(v_{q},y)\big)^{V}.$$ 
for all $p,q$ such that $|p-q|>2$. 
Furthermore, except for at most $3$ consecutive vertices of $g_{x,y}$, we have $$ \sqrt[2U]{i(x,v_{p})\cdot i(v_{p},y)}\leq  i(x,y)\leq   \big( i(x,v_{p})\cdot i(v_{p},y)\big)^{2V}.$$ 

\end{corollary}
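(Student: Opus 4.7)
The right-hand (upper) bounds in both displays are essentially tautological: since a tight geodesic is in particular a geodesic, the first is an immediate rewriting of Theorem \ref{E''} applied to $g_{x,y}$, and the second of Corollary \ref{Direct}.

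For the left-hand (lower) bounds, my plan is to apply Theorem \ref{E'} with one index pinned at an endpoint. Since $x=v_0$ and $y=v_{d_S(x,y)}$ are themselves vertices of $g_{x,y}$, Theorem \ref{E'} gives
\[ i(x,v_p) = i(v_0,v_p) \leq i(x,y)^U \quad\text{and}\quad i(v_p,y) = i(v_p,v_{d_S(x,y)}) \leq i(x,y)^U, \]
and the analogous two bounds with $q$ in place of $p$. Multiplying these four inequalities yields
\[ i(x,v_p)\cdot i(v_p,y)\cdot i(x,v_q)\cdot i(v_q,y) \leq i(x,y)^{4U}, \]
and taking the $4U$-th root produces the first lower bound. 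Multiplying only the two inequalities involving the single index $p$ and taking the $2U$-th root produces the second lower bound, and since this holds for every $p$, it is in particular valid on the range where the matching upper bound from Corollary \ref{Direct} is known.

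The only subtlety I foresee is the boundary convention inherited from Theorem \ref{E''}, which declares $i(x,v_1)=1$ and $i(v_{d_S(x,y)-1},y)=1$ in place of $0$. For the upper bounds this is already built into the cited results. For the lower bounds, the substitution of $1$ for $0$ can only inflate a single factor on the left, but whenever an index $p\geq 1$ exists we have $d_S(x,y)\geq 2$, hence $i(x,y)\geq 1$ and so $1\leq i(x,y)^U$; the bound therefore survives the substitution. I do not anticipate a genuine obstacle here: the corollary is a bookkeeping consolidation of Theorems \ref{E'} and \ref{E''} together with Corollary \ref{Direct}, and the constants $4U$ and $2U$ arise simply from the number of factors on which Theorem \ref{E'} is invoked.
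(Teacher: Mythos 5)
Your proposal is correct and matches the (implicit) intended argument: the paper states Corollary \ref{E'''} without proof as a direct consequence of Theorem \ref{E'}, Theorem \ref{E''}, and Corollary \ref{Direct}, and your derivation — upper bounds verbatim from Theorem \ref{E''} and Corollary \ref{Direct}, lower bounds by applying Theorem \ref{E'} with one index pinned at $v_0=x$ or $v_{d_S(x,y)}=y$ and multiplying — is exactly the right bookkeeping, including the correct observation that the $i(x,v_1)=1$ convention is harmless because $i(x,y)\geq 1$ once an interior vertex exists.
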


\textbf{Plan of the paper.}
A key technique to derive the main results stated in $\S 1$ is to use Theorem \ref{E} by varying a constant $n$ in the statement, see $\S 3$. Theorem \ref{E'} and Theorem \ref{E''} easily follow from this technique with Lemma \ref{sss} (it requires some work to show, but straightforward) and Remark \ref{r} (an elementary fact) respectively. Therefore, $\S 2$ will be the key section as we develop technical machinery to be used in $\S 3$.
The main result of $\S 2$ is Theorem \ref{E}. 
We will prove Theorem \ref{E} by using Choi--Rafi formula, Theorem \ref{marking}; nevertheless, the difficulty in this approach is to construct a pair of markings from a given pair of curves controlling intersection numbers, see Lemma \ref{E1}, Lemma \ref{E2}, and Corollary \ref{E3}. 
However, once we have Corollary \ref{E3}, Theorem \ref{E} immediately follows from Choi--Rafi formula with some elementary observations. 


\renewcommand{\abstractname}{\textbf{Acknowledgements}}
\begin{abstract}
The author thanks Kenneth Bromberg for useful discussion on $\S 2.1.1.$ The author also thanks Mladen Bestvina and Kasra Rafi for useful conversations. Some parts of this paper was written during his stay at the University of Illinois at Urbana--Champaign, the author thanks Christopher Leininger for his warm hospitality and the GEAR Network for supporting this trip. Lastly, the author thanks the referee for useful suggestions in the revision process.
 \end{abstract}

\section{\textbf{Background and machinery}}
The main goal of this section is to obtain Lemma \ref{sss} and Theorem \ref{E}. The proofs of Theorem \ref{E'} and Theorem \ref{E''} rely on Lemma \ref{sss} and Theorem \ref{E}.

First, we briefly review our basic tool, \emph{subsurface projections}. For a detailed treatment, see \cite{MM2}.
Let $Z$ be a subsurface of $S$. The subsurface projection is a map $$\pi_{Z}:C(S)\longrightarrow C(Z).$$

Suppose $Z$ is not an annulus. If $x\in C(S)$, then $\pi_{Z}(x)$ is a curve in $Z$ which is obtained by first picking an arc or a curve $a\in \{x \cap Z\}$ and taking a boundary component of a regular neighborhood of $a\cup \p(Z)$ in $Z$. 

Suppose $Z$ is an annulus. Fix a hyperbolic metric on $S$ and compactify the annular cover of $S$, which corresponds to $Z$, with its Gromov boundary; we denote the resulting cover by $S^{Z}$. We define the annular--curve graph of $Z$ on $S^{Z}$, altering the original definition given in $\S 1$; the vertices are the set of isotopy classes of arcs which connect two boundary components of $S^{Z}$, here the isotopy is relative to $\partial (S^{Z}) $ \emph{pointwise}. We put the edge between two vertices if they can be realized disjointly in the interior of $S^{Z}$. If $x \in C(S)$, then $\pi_{Z}(x)$ is an arc obtained by the lift of $x$ which connects two boundary components of $S^{Z}$. 

Let $A,B \subseteq C(S)$. For both non--annular and annular projections, we define $ \displaystyle \pi_{Z}(A):=\bigcup_{a\in A}\pi_{Z}(a)$ and $ d_{Z}(A,B):=diam_{C(Z)} \pi_{Z}(A) \cup \pi_{Z}(B).$

\begin{remark}\label{r}

We remark that if $A,B \subseteq C(S)$ such that $d_{S}(A,B)\geq 3$ then $\pi_{Z}(A)\neq \emptyset$ or $\pi_{Z}(B)\neq \emptyset$ because $A$ and $B$ fill $S$.

\end{remark}



We recall the following results from \cite{MM2}.

\begin{lemma}[{\cite[Lemma 2.2 \& Lemma 2.3]{MM2}}]\label{oct}
If $x,y\in C(S)$ such that $d_{S}(x,y)\leq 1$ then $d_{Z}(x,y)\leq 2$ for all $Z\subseteq S$.
\end{lemma}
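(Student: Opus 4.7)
I would split on whether $d_S(x,y)=0$ or $d_S(x,y)=1$, and in the latter case further on whether $Z$ is annular. The case $x=y$ is trivial since $\pi_Z(x)=\pi_Z(y)$ and so $d_Z(x,y)=0$. For the remainder, assume $x,y$ admit disjoint simple representatives in $S$ and that both $\pi_Z(x)$ and $\pi_Z(y)$ are nonempty (otherwise the statement is vacuous on one side by the convention from Remark \ref{r}).

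For $Z$ non-annular, the plan is to build a common near-neighbor in $C(Z)$. Using disjoint representatives of $x$ and $y$, let $a\subseteq x\cap Z$ and $b\subseteq y\cap Z$ be components (each either an essential arc with endpoints on $\partial Z$ or an essential curve in $Z$) used to define $\pi_Z(x)$ and $\pi_Z(y)$. Form the regular neighborhood $N:=N(a\cup b\cup \partial Z)\subseteq Z$. Since $\pi_Z(a)\subseteq N(a\cup \partial Z)\subseteq N$ and $\pi_Z(b)\subseteq N(b\cup \partial Z)\subseteq N$, any component $c$ of $\partial N$ lying in the interior of $Z$ is automatically disjoint from both $\pi_Z(a)$ and $\pi_Z(b)$. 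If some such $c$ is essential in $Z$, then $d_Z(\pi_Z(a),c)\leq 1$ and $d_Z(c,\pi_Z(b))\leq 1$, so $d_Z(x,y)\leq 2$; this holds uniformly across all choices of components, and different components of $\pi_Z(x)$ alone can already be realized disjointly, so the diameter bound follows.

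The main obstacle is the degenerate subcase where every interior boundary component of $N$ is peripheral in $Z$, i.e.\ $a\cup b\cup \partial Z$ fills $Z$. A quick Euler characteristic computation ($\chi(N)\geq -2$, with equality only when both of $a,b$ are arcs) shows that this can occur only for a short list of topological types of $Z$, including the $\xi(Z)=1$ cases $S_{1,1}$ and $S_{0,4}$ where $C(Z)$ is defined by a nonzero intersection pattern. In each such configuration I would verify the bound by hand: the constraints on how two disjoint arcs/curves together with $\partial Z$ can fill $Z$ force $\pi_Z(a)$ and $\pi_Z(b)$ to be either isotopic or realizable with few enough intersections that $d_Z(\pi_Z(a),\pi_Z(b))\leq 2$ in the appropriate (possibly modified) $C(Z)$.

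For $Z$ annular, the argument is cleaner. Since the annular cover $S^Z\to S$ is a local diffeomorphism, disjoint representatives of $x$ and $y$ lift to a pairwise disjoint family of arcs and curves in $S^Z$; in particular the lifts that connect the two boundary components of $S^Z$, which are precisely the arcs contributing to $\pi_Z(x)$ and $\pi_Z(y)$, form a pairwise disjoint family. A collection of pairwise disjoint arcs in the annular curve graph has diameter at most $1$, so $d_Z(x,y)\leq 1\leq 2$.
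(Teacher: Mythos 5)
The paper does not actually prove Lemma \ref{oct}; it is quoted verbatim from Masur--Minsky \cite{MM2} (their Lemmas 2.2 and 2.3), so there is no in-paper argument to compare against. Your overall strategy --- split on annular versus non-annular $Z$; for non-annular $Z$ find a curve disjoint from both projections inside $\partial N(a\cup b\cup\partial Z)$; for annular $Z$ use that lifts of disjoint curves remain disjoint --- is the standard route and is essentially what \cite{MM2} does, so the approach itself is sound.

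Two steps in the non-annular case do not hold up as written. First, the claim that ``different components of $\pi_Z(x)$ alone can already be realized disjointly'' is false. For two components $a_1, a_2$ of $x\cap Z$, the arc $a_2$ has endpoints on $\partial Z$ and therefore enters $N(a_1\cup \partial Z)$, so $\pi_Z(a_2)$ can cross $\pi_Z(a_1)$; for instance, if $Z=S_{1,1}$ and $a_1,a_2$ are disjoint non-isotopic arcs of $x\cap Z$, then $\pi_Z(a_1)$ and $\pi_Z(a_2)$ intersect once. The correct statement is only $\mathrm{diam}(\pi_Z(x))\le 2$, and it requires running your own $N(a_1\cup a_2\cup\partial Z)$ argument on that pair too; the conclusion survives, but not for the reason you give. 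Second, and more substantively, the filling subcase is left at ``I would verify the bound by hand,'' which is precisely where the content lies. The Euler characteristic bound $\chi(Z)\ge -2$ narrows $Z$ to $S_{1,1}$, $S_{0,4}$, and $S_{1,2}$ (for proper non-annular $Z$ with $\xi(Z)\ge 1$), but you only flag the $\xi=1$ cases and check none of them; in particular $S_{1,2}$, which is not one of the low-complexity exceptions and where the ordinary curve graph is used, is not addressed at all. Until those cases are actually worked out, the bound $d_Z(\pi_Z(a),\pi_Z(b))\le 2$ is not established and the proof is incomplete.
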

We note that $2$ in Lemma \ref{oct} needs to be replaced by $3$ if $\x(S)=1$. The following theorem is called the \emph{Bounded Geodesic Image Theorem}.
\begin{theorem}[{\cite[Theorem 3.1]{MM2}}] \label{BGIT}
Let $Z$ be a proper subsurface of $S$. Let $\{v_{i}\}_{0}^{n}$ be a (multi)geodesic in $C(S)$ such that $\pi_{Z}(v_{i})\neq \emptyset$ for all $ i $. There exists $M$ depending only on $S$ so that $d_{Z} (v_{0},v_{n}) \leq M.$
\end{theorem}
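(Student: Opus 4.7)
The plan is to combine the Gromov hyperbolicity of $C(S)$ (Masur--Minsky) with the geometry of Teichm\"uller geodesics and the Lipschitz-type behavior of subsurface projections on edges of $C(S)$ (Lemma \ref{oct}). Given the geodesic $\{v_i\}_0^n$, I would first replace it by a Teichm\"uller geodesic $\gamma$ in $\mathrm{Teich}(S)$ joining conformal structures on which $v_0$ and $v_n$ respectively have bounded extremal length. At each time $t$ along $\gamma$, one picks a Bers short marking $\mu_t$, consisting of all curves of extremal length below a universal threshold depending only on $\x(S)$. A theorem of Masur--Minsky then says that the path $t \mapsto \mu_t$ is a uniform quasi-geodesic in $C(S)$ and fellow-travels any $C(S)$-geodesic from $v_0$ to $v_n$, in particular $\{v_i\}$.

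The heart of the argument is a dichotomy on the boundary multicurve $\p Z$. In Case 1, $\p Z$ has extremal length bounded below along all of $\gamma$; then a quantitative extremal-length estimate, applied together with Lemma \ref{oct} along the marking path, shows that $\pi_Z(\mu_t)$ changes by a uniformly bounded amount over all of $\gamma$, so $d_Z(v_0,v_n)$ is bounded by a constant depending only on $S$. In Case 2, $\p Z$ becomes Bers-short at some time $t_0$, so a component of $\p Z$ appears in $\mu_{t_0}$; fellow-traveling then places $\p Z$ within uniformly bounded $C(S)$-distance of some $v_i$. A careful quantification of this bound forces some $v_j$ to actually be disjoint from $\p Z$, giving $\pi_Z(v_j)=\emptyset$ and contradicting the standing hypothesis. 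Hence only Case 1 can occur and we obtain the desired uniform $M$.

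The main obstacle will be the quantitative part of Case 2: converting the analytic statement ``$\p Z$ is Bers-short at $t_0$'' into the combinatorial statement ``some $v_j$ is disjoint from $\p Z$'' requires careful bookkeeping between extremal length, hyperbolic length, and $C(S)$-distance, and the fellow-traveling constants must be made to depend only on the topology of $S$. If this analytic route becomes delicate, I would fall back on the purely combinatorial proof (Bowditch, Hensel--Przytycki--Webb): work directly with $\delta$-hyperbolicity of $C(S)$ and establish a contraction-type property showing $\pi_Z$ is coarsely constant on vertices of $C(S)$ far from $\p Z$, so that if $d_Z(v_0,v_n)$ were large, hyperbolicity would force the geodesic $\{v_i\}$ to pass near $\p Z$, and hence some $v_i$ to be disjoint from $\p Z$, contradicting $\pi_Z(v_i)\neq \emptyset$. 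This approach yields the uniform constant $M$ without invoking Teichm\"uller theory.
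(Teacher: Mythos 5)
This theorem is not proved in the paper; it is quoted verbatim from Masur--Minsky \cite{MM2} (with the follow-up remark that $M$ can even be taken uniform, $M\leq 200$, by Webb \cite{WEB1}). So there is no in-paper argument to compare against, and a proof proposal here is doing work the paper deliberately delegates to a citation.

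Evaluated on its own terms, your primary route has a genuine gap at the crux. In Case 2 you conclude from fellow-traveling that $\p Z$ lies within \emph{uniformly bounded} $C(S)$-distance of some $v_i$, and then assert that ``a careful quantification'' upgrades this to \emph{some $v_j$ disjoint from $\p Z$}. That upgrade is not automatic: hyperbolic fellow-traveling gives distance $\leq K$ for a constant $K$ typically much larger than $1$, and nothing in the setup forces the geodesic to actually enter the $1$-ball around $\p Z$. Bridging that gap is exactly the content of the contraction/coarse-Lipschitz property of $\pi_Z$ that you invoke only in your fallback; without it the primary argument does not close. In addition, the primary route quietly relies on two heavy inputs — that the Bers-marking path along a Teichm\"uller geodesic is a uniform quasi-geodesic fellow-traveling $\{v_i\}$, and (for Case 1) that a subsurface whose boundary never gets short has uniformly bounded projection drift along $\gamma$ — both of which are themselves substantial theorems whose proofs are comparable in difficulty to BGIT itself. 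Your fallback (hyperbolicity plus a contraction property for $\pi_Z$, \`a la Bowditch and Webb) is the approach that actually yields the theorem cleanly with a uniform constant, and if you intend to prove BGIT rather than cite it, that is the argument you should develop in detail; the Teichm\"uller detour adds machinery without removing the essential difficulty.
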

In the rest of this paper, $M$ will denote the constant given by Theorem \ref{BGIT}. Note that $M$ can be taken so that it does not depend on $S$, for instance $M\leq 200$, see \cite{WEB1}.

We observe a special behavior of tight geodesics under the Bounded Geodesic Image Theorem. 

\begin{lemma}\label{sss}
Let $x,y\in C(S)$ and $g_{x,y}=\{v_{i}\}$ be a tight geodesic such that $d_{S}(x,v_{i})=i$ for all $i$. 
Suppose $\pi_{Z}(v_{p})\neq \emptyset$ and $\pi_{Z}(v_{q})\neq \emptyset$ where $Z\subsetneq S$. Assume $q>p$. If $d_{Z}(v_{p},v_{q})> M$ then $d_{Z}(x,v_{p})\leq M \text{ and }d_{Z}(v_{q},y)\leq M.$

\end{lemma}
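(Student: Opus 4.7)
My approach combines the Bounded Geodesic Image Theorem (Theorem~\ref{BGIT}) with a structural property of tight multigeodesics, namely that the set $E := \{i : \pi_Z(V_i) = \emptyset\}$ (where $\{V_i\}$ is the tight multigeodesic underlying $\{v_i\}$) forms an interval.

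First, I would apply the contrapositive of Theorem~\ref{BGIT} to the submultigeodesic $V_p, V_{p+1}, \ldots, V_q$. Since $d_Z(v_p, v_q) > M$ while $\pi_Z(v_p), \pi_Z(v_q) \neq \emptyset$ (so in particular $\pi_Z(V_p), \pi_Z(V_q) \neq \emptyset$), there must exist some index $l \in (p, q)$ with $\pi_Z(V_l) = \emptyset$. Granting the interval property, since $p, q \notin E$ but $l \in E$, the interval $E$ lies entirely within $(p, q)$; consequently $\pi_Z(V_i) \neq \emptyset$ for all $i \in [0, p] \cup [q, d_S(x,y)]$. Applying Theorem~\ref{BGIT} to the two end-submultigeodesics then yields $d_Z(V_0, V_p) \leq M$ and $d_Z(V_q, V_{d_S(x,y)}) \leq M$, which dominate $d_Z(x, v_p)$ and $d_Z(v_q, y)$ respectively (using $V_0 = \{x\}$, $V_{d_S(x,y)} = \{y\}$, $v_p \in V_p$, $v_q \in V_q$).

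The heart of the proof is the interval property of $E$, which I would prove by contradiction. Suppose $E$ contains $a' < b'$ that are adjacent in $E$ (no element of $E$ strictly between them) with $b' \geq a' + 2$. The boundary cases $a' = 0$ or $b' = d_S(x,y)$ can be handled directly, as $\pi_Z(x) = \emptyset$ or $\pi_Z(y) = \emptyset$ makes the corresponding conclusion of the lemma trivially hold. Otherwise, the tightness condition $V_j = \partial F(V_{j-1}, V_{j+1})$ at $j = a'$, combined with $\pi_Z(V_{a'}) = \emptyset$ and $\pi_Z(V_{a'+1}) \neq \emptyset$, should force $\partial Z \subseteq V_{a'}$: otherwise the boundary of the regular neighborhood $F(V_{a'-1}, V_{a'+1})$ would contain an essential curve in $Z$ not parallel to a component of $\partial Z$, which would belong to $V_{a'}$ and project nontrivially, contradicting $\pi_Z(V_{a'}) = \emptyset$. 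A symmetric argument gives $\partial Z \subseteq V_{b'}$. But then $V_{a'}$ and $V_{b'}$ share a common component (a component of $\partial Z$), which contradicts the multigeodesic property: any pair $(u, w) \in V_{a'} \times V_{b'}$ satisfies $d_S(u, w) = |b' - a'| \geq 2$, whereas a common curve $c$ would give $d_S(c, c) = 0$.

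The main obstacle is the tightness argument establishing $\partial Z \subseteq V_{a'}$. This requires a careful geometric analysis of how the regular neighborhood $F(V_{a'-1}, V_{a'+1})$ meets $Z$ and how essential curves in $\partial(F \cap Z)$ appear as components of $V_{a'}$. A further subtlety arises when $V_{a'-1}$ itself has empty projection (so the block of $E$ containing $a'$ has more than one element); in that case the argument must be propagated along the block to reach an adjacent index where both neighboring projections are nonempty.
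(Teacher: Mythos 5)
Your high-level plan is sound and essentially a repackaging of the paper's argument: apply the Bounded Geodesic Image Theorem to find an index with empty projection strictly between $p$ and $q$, argue that the empty-projection set $E$ cannot straddle $p$ or $q$, and finish with two more applications of Theorem~\ref{BGIT}. The two ingredients you invoke --- the filling obstruction for far-apart indices and the tightness condition for the boundary case --- are exactly the ones the paper uses.

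However, the tightness step, which you yourself flag as ``the main obstacle,'' is on the wrong track. You attempt to deduce $\partial Z \subseteq V_{a'}$ from $\pi_Z(V_{a'}) = \emptyset$ and $\pi_Z(V_{a'+1}) \neq \emptyset$; this is a much stronger claim than what tightness actually gives, your sketch of it does not hold up (the negation of $\partial Z \subseteq V_{a'}$ does not force an essential curve of $Z$ into $\partial F(V_{a'-1}, V_{a'+1})$; one can also have $Z$ properly contained in $F(V_{a'-1},V_{a'+1})$ with $\partial Z$ in its interior), and as you note it degenerates further when $V_{a'-1}$ also has empty projection. The tightness input the paper uses is both simpler and the correct direction of implication: \emph{if both $\pi_Z(V_{j-1}) = \emptyset$ and $\pi_Z(V_{j+1}) = \emptyset$, then $\pi_Z(V_j) = \emptyset$}, because $V_{j-1} \cup V_{j+1}$ can be isotoped off the interior of $Z$, hence so can its regular neighborhood and its boundary $V_j = \partial F(V_{j-1},V_{j+1})$. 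Equivalently, in the form the paper applies it: if $\pi_Z(V_j) \neq \emptyset$ and $\pi_Z(V_{j+1}) = \emptyset$, then $\pi_Z(V_{j-1}) \neq \emptyset$. Once you also use the filling observation to bound the diameter of $E$ by $2$ (so that consecutive elements of $E$ are at distance at most $2$), the only way $E$ could fail to be an interval is $E \supseteq \{m, m+2\}$ with $m+1 \notin E$, and the implication above applied at $j = m+1$ kills that case immediately. In short: replace your $\partial Z \subseteq V_{a'}$ detour with this contrapositive tightness implication, use filling to reduce to the gap-two case, and your argument closes.
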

\begin{proof}
Take a tight multigeodesic $\{V_{i}\}$ between $x$ and $y$ such that $v_{i} \in V_{i}$ for all $i$. If $d_{Z}(v_{p},v_{q})> M$ then we must have $\pi_{Z}(V_{h})= \emptyset$ where $p<h<q$ by Theorem \ref{BGIT}.

If $p+1<h<q-1$, then $\pi_{Z}(V_{k})\neq \emptyset$ for all $k<p$ and for all $k>q$ because $V_{h}$ and $V_{k}$ fill $S$; we are done by Theorem \ref{BGIT}. 

If $h=p+1$ or $h=q-1$, then we use tightness. Assume $h=p+1$. Since $V_{p}=\partial (F(V_{p-1},V_{p+1}))$, $\pi_{Z}(V_{p+1})=\emptyset$ and $\pi_{Z}(V_{p})\neq \emptyset$, we must have $\pi_{Z}(V_{p-1})\neq \emptyset$. We repeat the argument in the previous case about filling, and conclude $\pi_{Z}(V_{k})\neq \emptyset$ for all $k<p$. By Theorem \ref{BGIT}, we have $d_{Z}(x, v_{p})\leq M$. Lastly, by using similar techniques given so far, we also observe $d_{Z}(v_{q},y)\leq M$.


\end{proof}
\subsection{On Choi--Rafi formula}
A \emph{Pants decomposition} is a collection of mutually disjoint curves which cut the surface into pairs of pants. A \emph{marking} is a collection of curves obtained by taking a pants decomposition and choosing extra curves so that they together fill the surface. We call such extra curves \emph{transversal curves}.
For the rest of this paper, we use the following notations.

\begin{notation}
Let $n,m\in \mathbb{R}$, $n\prec m$ means there exists positive constants $k,c$ such that $n\leq k\cdot m+c$. If $n\prec m$ and $m\prec n$ then we write $n\asymp m$. \emph{In this paper, we use these coarse inequality notations only when $k,c$ depend only on the surface.} 
\end{notation}

Recall the following beautiful formula derived by Choi--Rafi:

\begin{theorem}[{\cite[Corollary D]{CR}}] \label{marking}
There exists $N$ such that for any markings $\sigma$ and $\tau$ on $S$, $$\log i(\sigma,\tau)\asymp \sum_{Z\subseteq S}[ d_{Z}(\sigma,\tau)]_{N}+\sum_{A\subseteq S} \log [d_{A}(\sigma,\tau)]_{N}$$ where $[m]_{n}=m$ if $m>n$, $[ m]_{n}=0$ if $m\leq n$, and the sum is taken over all $Z$ which are not annuli and $A$ which are annuli in $S$.
\end{theorem}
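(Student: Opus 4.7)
This statement is the Choi--Rafi formula cited from \cite[Corollary D]{CR}; I outline the strategy I would use to prove it from scratch. My plan would combine the Masur--Minsky distance formula on the marking graph $M(S)$ with a careful analysis of how intersection number changes under elementary marking moves (twists and flips). Recall that Masur--Minsky's distance formula states $d_{M(S)}(\sigma,\tau) \asymp \sum_{Z}[d_{Z}(\sigma,\tau)]_{K}$ for a sufficiently large threshold $K$, where the sum is over all essential subsurfaces. The key structural input will be that the hierarchy machinery of \cite{MM2} organizes the subsurfaces with $d_{Z}(\sigma,\tau)>N$ into a partial order (essentially by time-order along the hierarchy), so their contributions will telescope rather than interfere.

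For the upper bound $\log i(\sigma,\tau) \prec \text{RHS}$: I would connect $\sigma$ to $\tau$ by a geodesic in $M(S)$ and track how pairwise intersection numbers evolve along it. A flip replaces one transversal by one disjoint from the previous choice and so multiplies intersection numbers with any fixed reference curve by a uniformly bounded factor, whereas a twist around the core of an annulus $A$ changes intersection numbers only by an additive constant per twist. Iterating, the total flip count contributes linearly to $\log i$, matching the $\sum_{Z}[d_{Z}(\sigma,\tau)]_{N}$ non-annular term, while the total twist count contributes only through its logarithm, matching the $\sum_{A}\log[d_{A}(\sigma,\tau)]_{N}$ annular term. The cleanest way to separate the two regimes is to first resolve $\sigma$ and $\tau$ to markings that agree on the pants-decomposition level and then absorb the remaining twisting into the annular coordinates.

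For the lower bound $\log i(\sigma,\tau) \succ \text{RHS}$: I would argue subsurface by subsurface. For each non-annular $Z$ with $d_{Z}(\sigma,\tau)>N$, the standard estimate $d_{Z}(x,y)\prec \log i(x,y)$ in $C(Z)$, applied to $\pi_{Z}(\sigma)$ and $\pi_{Z}(\tau)$, gives a linear-in-$d_{Z}$ contribution to $\log i(\sigma,\tau)$, since intersections of projections lift to intersections in $S$. For each annulus $A$ with core $\alpha$, a large relative twisting number $d_{A}(\sigma,\tau)$ forces $\asymp d_{A}$ intersections between a transversal to $\alpha$ in $\sigma$ and one in $\tau$, giving the $\log d_{A}$ contribution. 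To sum these contributions without double-counting, I would invoke the hierarchy: along any hierarchy from $\sigma$ to $\tau$, each subsurface $Z$ with large projection distance is used at most once as a ``geodesic domain,'' and its intersection contribution is realized in disjoint regions of $S$ at that stage.

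The main obstacle I expect is controlling the interaction among contributions from different subsurfaces — ensuring that the additive sum on the right-hand side really is a lower bound rather than being washed out by cancellation. This is exactly what the threshold $N$ buys: choosing $N$ larger than the Bounded Geodesic Image constant $M$ of Theorem \ref{BGIT} (plus the constants implicit in the hierarchy machinery) guarantees that only subsurfaces which genuinely appear as domains in the hierarchy contribute, and the hierarchical partial order on these domains makes the contributions add rather than overlap.
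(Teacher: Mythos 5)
The paper does not prove this statement; it is quoted verbatim as \cite[Corollary D]{CR} and used as a black box throughout (the paper's own contribution, Theorem \ref{E}, is a variant \emph{derived from} it via Corollary \ref{E3}). So there is no in-paper proof to compare your sketch against.

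As a reconstruction of Choi--Rafi's argument, your outline takes a genuinely different route from theirs. Choi--Rafi pass through Teichm\"uller geometry: they show $\log i(\sigma,\tau)$ is comparable to the Teichm\"uller distance between thick points where the respective markings are short, and then invoke Rafi's combinatorial formula for the Teichm\"uller metric, which already carries the asymmetry between linear non-annular terms and logarithmic annular terms. Your proposal instead works purely combinatorially in the marking graph, and while plausible in spirit, it has two real gaps. For the upper bound, the Masur--Minsky distance formula together with bounded effect of elementary moves only yields $\log i(\sigma,\tau) \prec \sum_{Z}[d_Z(\sigma,\tau)]_K$ with annular projections contributing \emph{linearly}; extracting the $\log[d_A]_N$ terms requires choosing a path in $M(S)$ that performs all twisting about a given annulus consecutively (a hierarchy resolution), which you flag but do not construct. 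For the lower bound, the no-double-counting claim is the crux, and time-ordering alone does not give it; one needs the nesting/overlapping structure of hierarchy domains and an induction on complexity to show the intersection contributions from distinct large-projection domains are realized essentially disjointly. In short, the proposal identifies the right obstacles but the work needed to surmount them is precisely what Choi--Rafi's detour through extremal length is designed to avoid.
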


We show Theorem \ref{marking} for two curves $x,y\in C(S)$ where we have more freedom on cut--off constants, which is Theorem \ref{E}. 
We remark that, in \cite{W3}, the author showed that for any curves $x$ and $y$ on $S$, if $n>0$ then $$\log i(x,y)  \prec \sum_{Z\subseteq S}[ d_{Z}(x,y)]_{n}+\sum_{A\subseteq S} \log [d_{A}(x,y)]_{n}$$ deriving all quasi--constants by a different approach from \cite{CR}. Therefore, it is left to show the converse direction; we first start with $x,y\in C(S)$ and complete them into markings $\sigma,\tau$ controlling $i(\sigma,\tau)$ by $i(x,y)$, see Corollary \ref{E3}. Then we use Theorem \ref{marking} to obtain Theorem \ref{E}. 

\subsubsection{Constructing good markings from curves}
The goal of this subsection is to establish Corollary \ref{E3}, which follows from Lemma \ref{E1} and Lemma \ref{E2}. For completeness, we will keep track of most of constants which appear in the proofs of Lemma \ref{E1} and Lemma \ref{E2}. However, the efficient reader is welcome to skim through, taking note that these constants will depend only on the surface.

Suppose $A\subseteq S$. We let $S-A$ denote a ``single'' complementary component of $A$ in $S$ which is not a pair of pants. We note that this choice of the component will not cause any issues, i.e., we can take any component which is not a pair of pants as $S-A$.

We first observe the following for pants decompositions.

\begin{lemma}\label{E1}
Let $x,y\in C(S)$ such that $x$ and $y$ fill $S$. There exist pants decompositions $\sigma^{p}$ and $ \tau^{p}$ such that $x\in \sigma^{p}, y\in \tau^{p}$, and $i(\sigma^{p},\tau^{p})  \prec i(x,y).$
\end{lemma}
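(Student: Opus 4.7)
The plan is to complete $\{x\}$ and $\{y\}$ to pants decompositions by iteratively adding curves obtained from subsurface projections, so that each addition contributes a controlled amount to the total intersection. Starting from $\sigma_{0}=\{x\}$, suppose $\sigma_{k}$ is a multicurve extending $x$; if it is not yet a pants decomposition, pick a complementary component $Y$ of $\sigma_{k}$ in $S$ that is not a pair of pants and set $c_{k+1}$ equal to a non--peripheral component of $\pi_{Y}(y)$ (i.e.\ a boundary component of a regular neighborhood of $a\cup \p Y$ in $Y$ for some essential arc $a$ of $y\cap Y$) provided $y$ meets $Y$, and let $c_{k+1}$ be any essential non--peripheral curve of $Y$ otherwise. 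Set $\sigma_{k+1}=\sigma_{k}\cup\{c_{k+1}\}$. Since each step strictly reduces the total complexity of complementary components, the procedure terminates after at most $\x(S)$ steps with a pants decomposition $\sigma^{p}\supseteq \{x\}$.

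The key estimate is that $i(c_{k+1},y)\leq 2\, i(y,\p Y)+O(1)\leq 2\, i(y,\sigma_{k})+O(1)$, because $c_{k+1}$ runs parallel to $\p Y$ (with a detour along $a$), so it crosses each arc of $y\cap Y$ at most twice near that arc's endpoints on $\p Y$, and the number of arcs of $y\cap Y$ is bounded by $i(y,\p Y)\leq i(y,\sigma_{k})$. When $y\cap Y=\emptyset$ the contribution is zero. Therefore $i(y,\sigma_{k+1})\leq 3\, i(y,\sigma_{k})+O(1)$, and after at most $\x(S)$ iterations one obtains $i(y,\sigma^{p})\leq C_{1}\cdot i(x,y)$ for a constant $C_{1}=C_{1}(S)$.

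Next, I repeat the construction with roles reversed: starting from $\tau_{0}=\{y\}$, at each step add a non--peripheral component of $\pi_{W}(\sigma^{p})$ for a complementary component $W$ of $\tau_{k}$ that is not a pair of pants. The identical bookkeeping yields $i(\sigma^{p},\tau_{k+1})\leq 3\, i(\sigma^{p},\tau_{k})+O(1)$ and hence $i(\sigma^{p},\tau^{p})\leq C_{2}\cdot i(\sigma^{p},y)\leq C_{1}C_{2}\cdot i(x,y)$, which is the desired coarse inequality $i(\sigma^{p},\tau^{p})\prec i(x,y)$.

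The main technical obstacle is verifying that each $c_{k+1}$ is genuinely essential and enlarges the current multicurve: one must choose a non--peripheral boundary component of the regular neighborhood (which exists because $Y$ is not an annulus or pair of pants, so adding such a curve cannot create an annulus component and it is not parallel into $\sigma_{k}$), and one must handle the case where $y\cap Y$ consists of closed curves rather than arcs. These are standard checks in the subsurface projection setup, with the small--complexity pieces ($\x(Y)=1$) requiring the most explicit verification.
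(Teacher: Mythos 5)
Your proof is correct, but it takes a different route from the paper's. The paper builds $\sigma^{p}$ and $\tau^{p}$ \emph{simultaneously}, setting $x_{i+1}=x_{i}\cup\pi_{S-x_{i}}(y)$ and $y_{i+1}=y_{i}\cup\pi_{S-y_{i}}(x)$ at each step; this forces the author to bound the interaction term $i\bigl(\pi_{S-x_{i}}(y),\,\pi_{S-y_{i}}(x)\bigr)$, which is the delicate part of the argument and is where the extra $4\cdot i(x,y)$ in the recursion $i(x_{i+1},y_{i+1})\leq 9\cdot i(x_{i},y_{i})+4\cdot i(x,y)$ comes from. You instead build $\sigma^{p}$ to completion first by projecting the single curve $y$ into complementary pieces, getting a clean linear recursion $i(y,\sigma_{k+1})\leq 3\cdot i(y,\sigma_{k})$ (since the projected arc is a subarc of $y$ and contributes no new intersections with $y$), and only then build $\tau^{p}$ by projecting the already--finished $\sigma^{p}$, again getting $i(\sigma^{p},\tau_{k+1})\leq 3\cdot i(\sigma^{p},\tau_{k})$ for the same reason. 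This sequential scheme sidesteps the mixed projection estimate entirely and produces a somewhat cleaner bookkeeping, at the cost of a second, dependent pass. Your concluding chain $i(\sigma^{p},\tau^{p})\leq C_{2}\,i(\sigma^{p},y)\leq C_{1}C_{2}\,i(x,y)$ is sound, the fallback to an arbitrary essential curve when the projecting object misses the complementary piece is handled correctly (it contributes zero intersection), and the termination count and essentiality/non--peripherality checks are the same standard verifications the paper implicitly uses. Both proofs yield a constant depending only on $\x(S)$.
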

\begin{proof}
Let $x_{1}=x$ and $y_{1}=y$, we define $$x_{i+1}=x_{i} \cup \pi_{S-x_{i}}(y)\text{   and   }y_{i+1}=y_{i} \cup \pi_{S-y_{i}}(x).$$ We first note that $\pi_{S-x_{i}}(y)\neq \emptyset$ since $d_{S}(x_{i}, y)\geq d_{S}(x,y)-d_{S}(x,x_{i})>2-1=1,$ which implies $y$ essentially intersects with $S-x_{i}$. Similarly, $\pi_{S-y_{i}}(x)\neq \emptyset$. This is the only place where we use the fact that $x$ and $y$ fill $S$ so that $d_{S}(x,y)>2$.

This process terminates when $i=\x(S)-1$ since $x_{\x(S)}$ and $y_{\x(S)}$ are pants decompositions. 

We show
\begin{equation}
i(x_{i+1}, y_{i+1})\leq 9\cdot i(x_{i},y_{i})+ 4\cdot i(x,y). \tag{$\dagger$}
\end{equation}
To obtain $(\dagger)$, it suffices to show the following.
\begin{enumerate}
\item $ i(\pi_{S-x_{i}}(y),y_{i})\leq 2\cdot i(x_{i},y_{i}).$
\item $i(x_{i},\pi_{S-y_{i}}(x))\leq 2\cdot i(x_{i},y_{i}).$
\item $i(\pi_{S-x_{i}}(y), \pi_{S-y_{i}}(x))\leq 4\cdot i(x_{i},y_{i})+ 4\cdot i(x,y).$
\end{enumerate}

For the first inequality, we need to consider the intersections of $\pi_{S-x_{i}}(y)$ and $y_{i}$ only in the regular neighborhood of $\p(S-x_{i})$ since $i(y,y_{i})=0$. We observe these intersections are bounded by $2\cdot i(x_{i}, y_{i}).$ See Figure \ref{Fig1}.

\begin{figure}[h]
\centering
  \includegraphics[width=120mm]{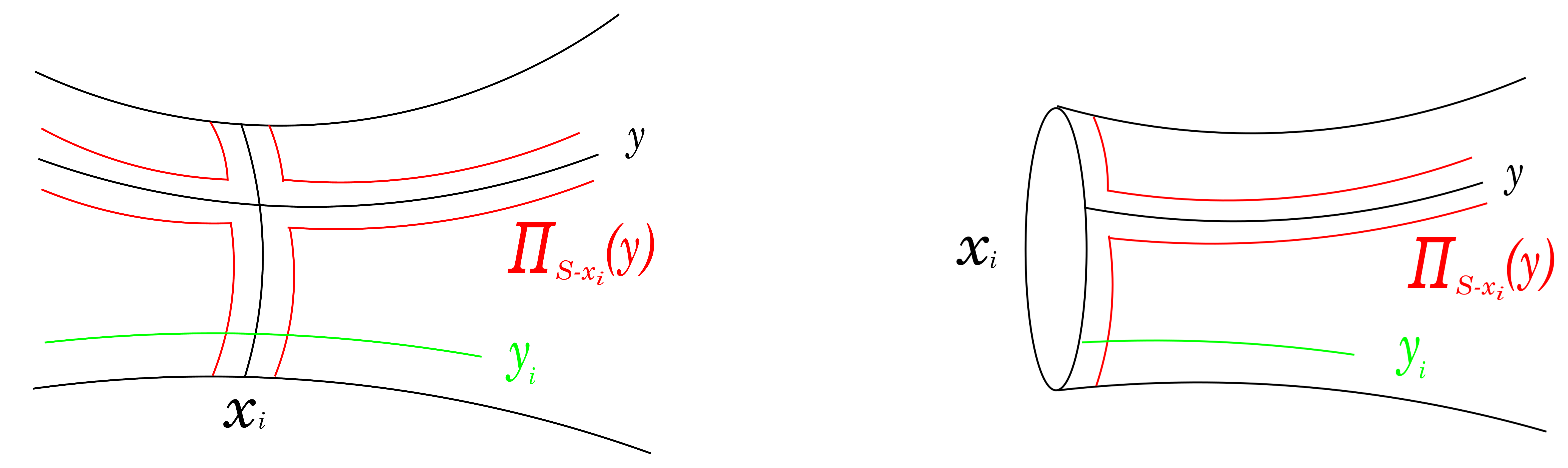}
 \caption{The left figure: $ i(\pi_{S-x_{i}}(y),y_{i})\leq 2\cdot i(x_{i},y_{i}).$ The right figure: if $x_{i}$ separates $S$, then we can sharpen so that $ i(\pi_{S-x_{i}}(y),y_{i})\leq i(x_{i},y_{i})$.}
 \label{Fig1}
\end{figure}

The same argument works to show the second inequality. 

For the third inequality, since $\pi_{S-x_{i}}(y)$ is contained in the regular neighborhood of $x_{i} \cup y$, it suffices to consider the intersections of $\pi_{S-x_{i}}(y)$ and $\pi_{S-y_{i}}(x)$ in the regular neighborhood of $x_{i}$ and in the regular neighborhood of $y$. 
\begin{itemize}
\item In the regular neighborhood of $x_{i}$, we can bound the intersections by $4\cdot i(x_{i},y_{i})$ by the definition of subsurface projections. See Figure \ref{Fig2} (Left). 

\item In the regular neighborhood of $y$, the intersections can arise only from the intersections of $x$ and $y$ since $i(y,y_{i})=0$; near every intersection of $x$ and $y$, $ \pi_{S-x_{i}}(y)$ and $\pi_{S-y_{i}}(x)$ intersect at most four times. See Figure \ref{Fig2} (Right). 
\end{itemize}
We have $i(\pi_{S-x_{i}}(y), \pi_{S-y_{i}}(x))\leq 4\cdot i(x_{i},y_{i})+ 4\cdot i(x,y).$
\begin{figure}[h]
\centering
 \includegraphics[width=120mm]{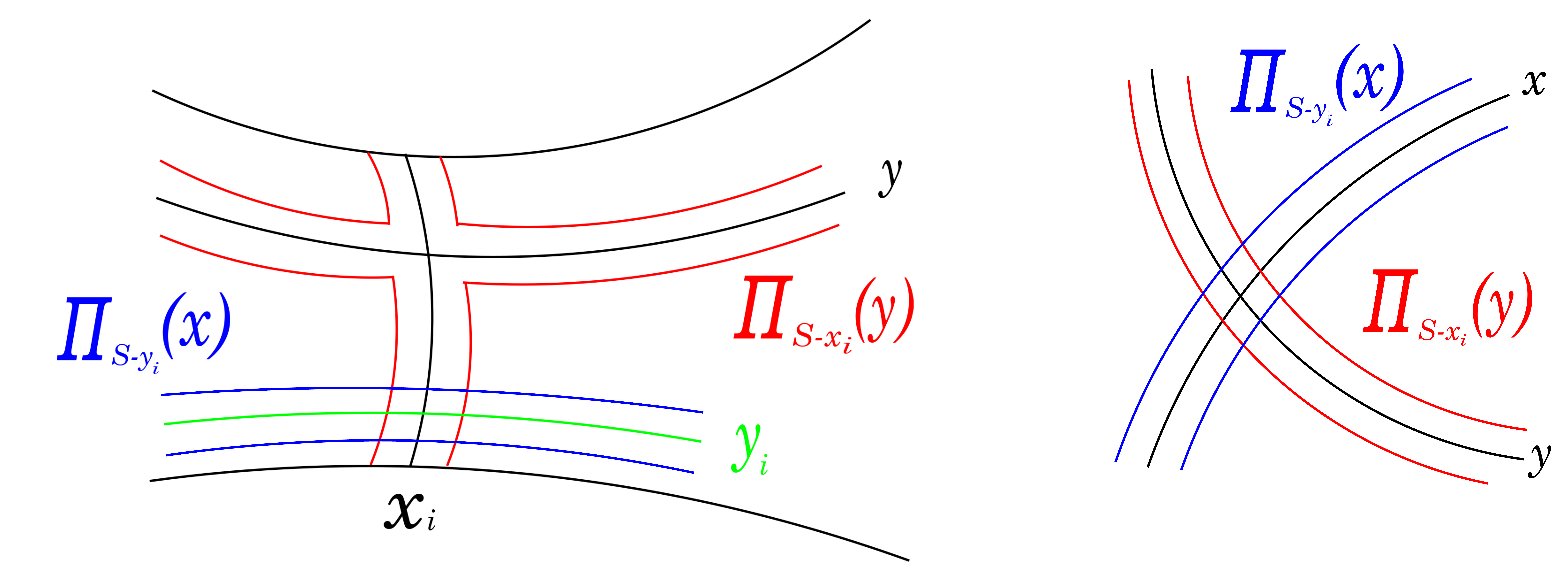}
 \caption{The left figure: $i(\pi_{S-x_{i}}(y), \pi_{S-y_{i}}(x))$ in the regular neighborhood of $x_{i}$. Note that if $x_{i}$ in the figure was $x$, then $\pi
 _{S-y_{i}}(x)$ would look different, but the same bound still works. The right figure: $i(\pi_{S-x_{i}}(y), \pi_{S-y_{i}}(x))$ in the regular neighborhood of $y$.}
 \label{Fig2}
\end{figure}

All together, we have ($\dagger$).
We let $\sigma^{p}=x_{\x(S)} \text{ and }\tau^{p}=y_{\x(S)}$. Then we have $$i(\sigma^{p},\tau^{p}) \prec i(x,y).$$
\end{proof}

We also observe the following for transversal curves. 

\begin{lemma}\label{E2}
Let $x,y\in C(S)$ such that $x$ and $y$ fill $S$, and let $\sigma^{p}$ and $\tau^{p}$ be pants decompositions such that $x\in \sigma^{p}$ and $y\in \tau^{p}$. There exist transversal curves $\sigma^{t}$ and $\tau^{t}$ so that by letting $\sigma=\sigma^{p}\cup \sigma^{t}$ and $\tau= \tau^{p}\cup \tau^{t} $ we have $i(\sigma, \tau)\prec i(\sigma^{p},\tau^{p})+i(x,y).$
\end{lemma}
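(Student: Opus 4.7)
The plan is to construct transversals by an explicit surgery. For each pants curve $\alpha \in \sigma^{p}$, let $N_{\alpha}$ be the connected component of $S \setminus (\sigma^{p}\setminus \alpha)$ that contains $\alpha$ as a non-peripheral curve; so $N_{\alpha}$ is either a once-holed torus or a four-holed sphere, and a transversal $\sigma^{t}_{\alpha}$ to $\alpha$ must be a simple closed curve in $N_{\alpha}$ meeting $\alpha$ exactly once or twice, respectively. I will build $\sigma^{t}_{\alpha}$ from arcs of $\tau^{p} \cap N_{\alpha}$ surgered against sub-arcs of $\alpha$ and of $\partial N_{\alpha}$; the transversals $\tau^{t}_{\beta}$ are defined symmetrically from $\sigma^{p} \cap N_{\beta}$.

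The key existence point is that $\sigma^{p} \cup \tau^{p}$ fills $S$ (since it contains $x \cup y$), which forces $\tau^{p}$ to meet $N_{\alpha}$ essentially and guarantees at least one arc of $\tau^{p} \cap N_{\alpha}$ crossing $\alpha$. Take such a crossing arc $a_{\alpha}$ (two of them in the four-holed sphere case) and cap it off along a sub-arc of $\alpha$ (torus case) or along sub-arcs of two components of $\partial N_{\alpha}$, producing a simple closed curve in $N_{\alpha}$ with the desired minimal crossing count. The main obstacle I anticipate is the topological case analysis: verifying that in every configuration (torus vs.\ four-holed sphere, and the various relative positions of $\alpha$ and the arcs of $\tau^{p}\cap N_{\alpha}$) a usable crossing arc exists and that the output of the surgery is an essential, non-peripheral simple closed curve in $N_{\alpha}$ meeting $\alpha$ minimally — both consequences of the filling property of $\sigma^{p},\tau^{p}$.

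Given the construction, I bound
\[
i(\sigma,\tau)\leq i(\sigma^{p},\tau^{p})+i(\sigma^{p},\tau^{t})+i(\sigma^{t},\tau^{p})+i(\sigma^{t},\tau^{t}).
\]
Each $\tau^{t}_{\beta}$ is the union of a sub-arc of some curve in $\sigma^{p}$ (disjoint from $\sigma^{p}$) together with a sub-arc of $\beta$ (meeting $\sigma^{p}$ at most $i(\sigma^{p},\beta)$ times), so $i(\sigma^{p},\tau^{t})\leq \sum_{\beta} i(\sigma^{p},\beta)\leq i(\sigma^{p},\tau^{p})$; symmetrically $i(\sigma^{t},\tau^{p})\prec i(\sigma^{p},\tau^{p})$. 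For $i(\sigma^{t}_{\alpha},\tau^{t}_{\beta})$, write $\sigma^{t}_{\alpha}$ as a sub-arc of some $\gamma \in \tau^{p}$ plus a sub-arc of $\alpha$, and $\tau^{t}_{\beta}$ as a sub-arc of some $\delta\in\sigma^{p}$ plus a sub-arc of $\beta$. Pairing the four terms, the two mixed contributions $i(\gamma,\beta)$ and $i(\alpha,\delta)$ vanish because each pair lies in a common multicurve, leaving at most $i(\gamma,\delta)+i(\alpha,\beta)\leq i(\sigma^{p},\tau^{p})+i(\alpha,\beta)$. Summing over the bounded number of pairs $(\alpha,\beta)$ gives $i(\sigma^{t},\tau^{t})\prec i(\sigma^{p},\tau^{p})$. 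Combining the four pieces yields $i(\sigma,\tau)\prec i(\sigma^{p},\tau^{p})$, which is stronger than (and thus implies) the asserted inequality $i(\sigma,\tau)\prec i(\sigma^{p},\tau^{p})+i(x,y)$.
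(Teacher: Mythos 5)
Your construction is a mild but genuinely distinct variant of the paper's.  The paper defines the transversal to $a\in\sigma^p$ as $\pi_W(y)$, the subsurface projection of the \emph{single curve} $y$ to the one-complexity piece $W$ around $a$; because $y$ is used, the interaction of $\sigma^t$ with $\tau^t$ near the subsurfaces around $x$ and $y$ unavoidably produces an $i(x,y)$ term, hence the statement's $i(\sigma^p,\tau^p)+i(x,y)$.  You instead surger arcs of the full pants decomposition $\tau^p$ (resp.\ $\sigma^p$) against sub-arcs of $\partial N_\alpha$, so every piece of every transversal is a sub-arc of some curve of $\sigma^p\cup\tau^p$, and each pairwise contribution lands in $i(\sigma^p,\tau^p)$.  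This gives $i(\sigma,\tau)\prec i(\sigma^p,\tau^p)$, which looks stronger but is in fact equivalent to the statement since $x\in\sigma^p$ and $y\in\tau^p$ force $i(x,y)\leq i(\sigma^p,\tau^p)$, so neither bound is actually sharper than the other.  The real gain of your route is conceptual tidiness: every intersection is traced back to $\sigma^p$ vs.\ $\tau^p$, whereas the paper must separately track intersections involving $x$ and $y$.

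Two imprecisions you should tighten.  First, the existence argument: filling of $S$ by $\sigma^p\cup\tau^p$ does \emph{not} by itself ``force at least one arc of $\tau^p\cap N_\alpha$ crossing $\alpha$'' — in a once-holed torus or four-holed sphere, $\alpha$ already fills $N_\alpha$ on its own, so $\tau^p$ could in principle miss $\alpha$ without violating the filling of $N_\alpha$.  The correct reason a crossing arc exists is that $\alpha$ cannot be disjoint from the pants decomposition $\tau^p$: if it were, $\alpha$ would be isotopic to a curve of $\tau^p$ and hence disjoint from both $x$ and $y$, contradicting that $x,y$ fill $S$.  Second, the surgery description in the once-holed torus case (``cap off along a sub-arc of $\alpha$'') does not produce a transversal; the cap should run along $\partial N_\alpha\subseteq\sigma^p\cup\partial S$.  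Neither fix changes your intersection bookkeeping (the capping arc still lies in $\sigma^p$ for $\sigma^t$, in $\tau^p$ for $\tau^t$), but as written the torus case is wrong.  You should also drop the requirement that the transversal meet $\alpha$ exactly once or twice — a curve of the form $\pi_{N_\alpha}(\cdot)$ need not realize the minimum, and the marking definition in use only requires filling, which any essential non-peripheral curve in $N_\alpha$ distinct from $\alpha$ achieves automatically.
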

\begin{proof}

We prove the statement by the following steps. Throughout, we use similar arguments given in Lemma \ref{E1}.

\underline{\textbf{Step 1 (Construction of $\sigma^{t}$ for $\sigma^{p}$):}}
For each curve $a \in \sigma^{p}$, we find a transversal curve $a^{t}$. 
Let $W\subseteq S$ such that $\x(W)=1$, $a\in C(W)$, and $\p(W) \subseteq \{ \sigma^{p}\cup \partial(S)\}.$ We take $a^{t}=\pi_{W}(y)\in C(W)$, note that $a^{t}$ exists because $x$ and $y$ fill $S$.

Since $i(y, \tau^{p})=0$, we have $i(a^{t}, \tau^{p} )\leq 2\cdot  i( \p(W),\tau^{p}).$ 
Now, since $\p(W)\subseteq \sigma ^{p}$, we have $i(a^{t}, \tau^{p} )\leq  2\cdot i( \p(W),\tau^{p}) \leq 2\cdot  i(\sigma^{p},\tau^{p}).$

We do this process for every curve in $\sigma^{p}$ and obtain the set of transversal curves $\sigma^{t}$. Then we have 
\begin{equation}
i(\sigma^{t},\tau^{p})  \leq 2\x(S)\cdot i(\sigma^{p},\tau^{p}).
\end{equation}

We also make the following observation for the next step.
For any $a\in \sigma ^{p}$ we have either $x\notin C(W)$ or $x\in C(W)$. In the first case, $i(a^{t},x)=0$. In the second case, near every intersection of $x$ and $y$, we see that $x$ and $a^{t}$ intersect at most twice; so $i(a^{t},x)\leq 2\cdot i(x,y).$ Since $|\sigma^{t}|=\x(S)$, we have 
\begin{equation}
i(\sigma^{t},x)\leq \x(S)\cdot (2\cdot i(x,y))= 2\x(S) \cdot i(x,y). \tag{$\ddagger$}
\end{equation}

\underline{\textbf{Step 2 (Construction of $\tau^{t}$ for $\tau^{p}$):}}
For each curve $b \in \tau^{p}$, we find a transversal curve $b^{t}$. 
Let $V\subseteq S$ such that $\x(V)=1$, $b\in C(V)$ and $\p(V) \subseteq \{ \tau^{p}\cup \partial(S)\}.$ 
We take $b^{t}=\pi_{V}(x)\in C(V).$ 

We first observe (\rmnum{1}) and (\rmnum{2}) to show $i(\sigma,b^{t})\leq  6\x(S) \cdot \big(i(\sigma^{p},\tau^{p})+ i(x,y)\big).$
\begin{enumerate}[label=(\roman*)]
\item $i(\sigma^{p}, b^{t} ) \leq 2\cdot i(\sigma^{p}, \tau^{p}).$
\item $i(\sigma^{t},b^{t}) \leq 4\x(S)\cdot i(\sigma^{p},\tau^{p})+4\x(S)\cdot i(x,y).$
\end{enumerate}

For (\rmnum{1}), we use the same argument given in the previous step; we have $i(\sigma^{p}, b^{t} )\leq 2\cdot i(\sigma^{p}, \p(V)) \leq 2\cdot i(\sigma^{p}, \tau^{p}).$

For (\rmnum{2}), we consider the intersections of $\sigma^{t}$ and $b^{t}$ in the regular neighborhood of $\p(V)$ and its complementary component in $V$.
See Figure \ref{Fig4}. 
We have 
\begin{align}
   i(\sigma^{t},b^{t})&\leq 2\cdot i(\sigma^{t},\p(V))+2\cdot i(\sigma^{t},x)\tag*{}\\
   	&  \leq 2\cdot i(\sigma^{t},\tau^{p})+2\cdot i(\sigma^{t},x) \tag{Since $\p(V)\subseteq \tau^{p}$}\\
                  & \leq  2\cdot i(\sigma^{t},\tau^{p})+4\x(S) \cdot i(x,y) \tag{By ($\ddagger$)}\\
                  & \leq   4\x(S)\cdot i(\sigma^{p},\tau^{p})+4\x(S) \cdot i(x,y). \tag{By (1)}
                  \end{align}

\begin{figure}[hbtp]
\centering
   \includegraphics[width=120mm]{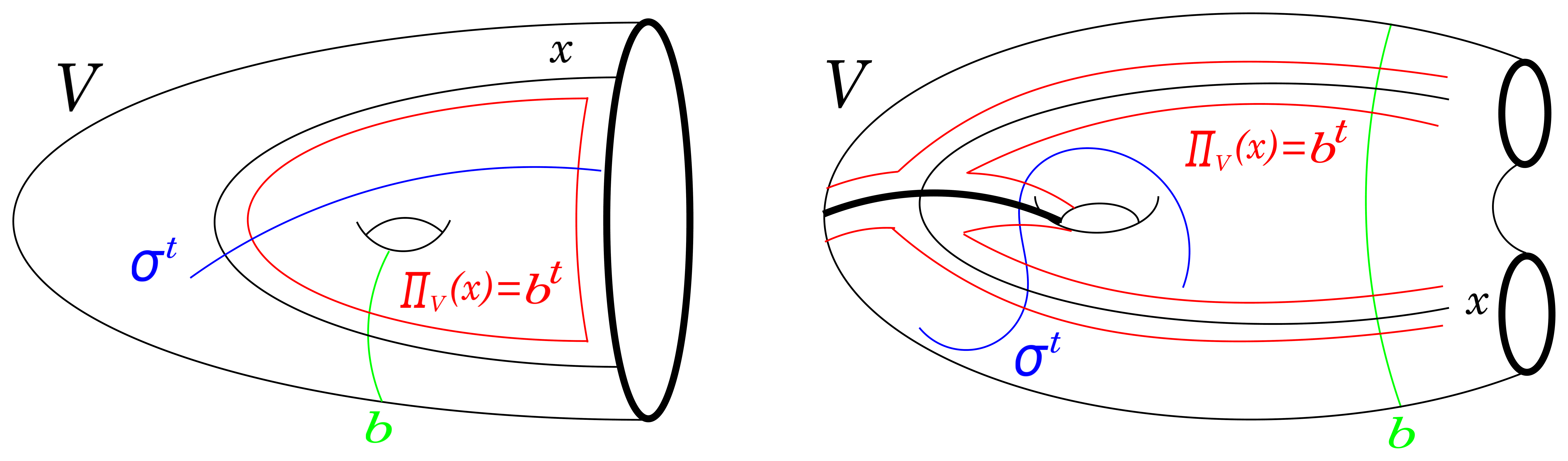}
 \caption{Bold lines represent $\partial(V)\subseteq \{ \tau^{p}\cup \partial(S)\}$. We observe $i(\sigma^{t},b^{t})$ in the regular neighborhood of $\p(V)$ is bounded by $2\cdot i(\sigma^{t},\p(V))$ and $i(\sigma^{t},b^{t})$ in the complement of the regular neighborhood of $\p(V)$ is bounded by $2\cdot i(\sigma^{t},x)$.}
 \label{Fig4}
\end{figure}

Therefore, we have 
\begin{align}
   i(\sigma,b^{t})&\leq i(\sigma^{p},b^{t})+i(\sigma^{t},b^{t})\tag{Since $\sigma=\sigma^{p}\cup \sigma^{t}$}\\
   	&  \leq 2\cdot i(\sigma^{p},\tau^{p})+4\x(S)\cdot i(\sigma^{p},\tau^{p})+4\x(S) \cdot i(x,y) \tag{By (\rmnum{1}) and (\rmnum{2})}\\
	&\leq  6\x(S) \cdot \big(i(\sigma^{p},\tau^{p})+ i(x,y)\big).\tag*{}
                 \end{align}

We do this process for every curve in $\tau^{p}$ and obtain the set of transversal curves $\tau^{t},$ and we have 
\begin{equation}
i(\sigma, \tau^{t}) \leq 6\x(S)^{2} \cdot \big(i(\sigma^{p},\tau^{p})+ i(x,y)\big).
\end{equation}
\underline{\textbf{Step3 (Checking $i(\sigma, \tau)\prec i(\sigma^{p},\tau^{p})+i(x,y)$):}}
Lastly, we take $\tau= \tau^{p}\cup \tau^{t},$ then we have 
\begin{align}
   i(\sigma, \tau) &=i(\sigma, \tau^{p})+i(\sigma, \tau^{t}) \tag{Since $\tau=\tau^{p}\cup \tau^{t}$}\\
   	& =i(\sigma^{p}, \tau^{p})+i(\sigma^{t}, \tau^{p})+i(\sigma, \tau^{t}) \tag{Since $\sigma=\sigma^{p}\cup \sigma^{t}$}\\
                  &\prec i(\sigma^{p},\tau^{p})+ i(x,y).\tag{By (1) and (2)}
\end{align}

\end{proof}

By Lemma \ref{E1} and Lemma \ref{E2}, we have 

\begin{corollary}\label{E3}
Let $x,y\in C(S)$ such that $x$ and $y$ fill $S$. There exist markings $\sigma$ and $\tau$ such that $x\in \sigma$, $y\in \tau$, and $i(\sigma, \tau)\prec i(x,y).$
\end{corollary}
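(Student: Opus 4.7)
The plan is to simply chain together the two lemmas that have just been established, since each handles one ingredient of a marking (the pants decomposition part and the transversal part) while preserving the coarse bound in terms of $i(x,y)$.

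First, since $x$ and $y$ fill $S$, Lemma \ref{E1} directly applies and produces pants decompositions $\sigma^{p}$ and $\tau^{p}$ with $x\in \sigma^{p}$, $y\in \tau^{p}$, and $i(\sigma^{p},\tau^{p}) \prec i(x,y)$. Next, feeding these pants decompositions into Lemma \ref{E2}, we obtain transversal curves $\sigma^{t}$ and $\tau^{t}$ such that, letting $\sigma = \sigma^{p}\cup \sigma^{t}$ and $\tau = \tau^{p}\cup \tau^{t}$, we have $i(\sigma,\tau) \prec i(\sigma^{p},\tau^{p}) + i(x,y)$. By construction $\sigma$ and $\tau$ are markings with $x\in \sigma$ and $y\in \tau$.

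Combining the two estimates gives
\[
i(\sigma,\tau) \prec i(\sigma^{p},\tau^{p}) + i(x,y) \prec i(x,y) + i(x,y) \asymp i(x,y),
\]
which is the desired bound. The only subtle point is that our coarse inequality notation $\prec$ hides multiplicative and additive constants, but these are transitive and stable under addition of two such expressions, so no extra care is needed; moreover all constants introduced in Lemmas \ref{E1} and \ref{E2} depend only on $\xi(S)$, which is exactly what is required for the conclusion. There is no real obstacle beyond verifying that the two lemmas compose cleanly, so the corollary follows in a few lines.
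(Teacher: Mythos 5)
Your proof is correct and follows exactly the paper's own argument: apply Lemma \ref{E1} to get pants decompositions with $i(\sigma^{p},\tau^{p})\prec i(x,y)$, then Lemma \ref{E2} to extend them to markings with $i(\sigma,\tau)\prec i(\sigma^{p},\tau^{p})+i(x,y)$, and chain the two coarse inequalities. Nothing differs from the paper beyond the (accurate) remark about transitivity of $\prec$.
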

\begin{proof}
Let $\sigma$ and $\tau$ be the markings given by Lemma \ref{E1} and \ref{E2}. We have
\begin{align}
   i(\sigma, \tau) &\prec i(\sigma^{p},\tau^{p})+ i(x,y) \tag{By Lemma \ref{E2}}\\
              & \prec i(x,y). \tag{By Lemma \ref{E1}}
\end{align}
\end{proof}

\subsubsection{Choi--Rafi formula for two curves}

We observe
\begin{theorem}\label{E}
There exists $N$ such that the following holds for any curves $x$ and $y$ on $S$; if $n\geq N$ then $$\log i(x,y) \asymp \sum_{Z\subseteq S}[ d_{Z}(x,y)]_{n}+\sum_{A\subseteq S} \log [d_{A}(x,y)]_{n}.$$ \end{theorem}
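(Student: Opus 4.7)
The direction $\log i(x,y)\prec \sum_{Z}[d_{Z}(x,y)]_{n}+\sum_{A}\log[d_{A}(x,y)]_{n}$ is already established in \cite{W3} for every $n>0$, so the content of Theorem \ref{E} is the reverse inequality with the threshold taken large enough. The plan is the one advertised in the outline: complete $x$ and $y$ into markings $\sigma\ni x$ and $\tau\ni y$ via Corollary \ref{E3}, apply the Choi--Rafi formula (Theorem \ref{marking}) to the pair $(\sigma,\tau)$, and transfer the resulting projection sum back down from $(\sigma,\tau)$ to $(x,y)$.

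Assume first that $x,y$ fill $S$. Let $N_{0}$ be the constant produced by Theorem \ref{marking} and take $N=N_{0}$; fix any $n\geq N$. Corollary \ref{E3} supplies markings $\sigma,\tau$ with $x\in\sigma$, $y\in\tau$, and $i(\sigma,\tau)\prec i(x,y)$. The key observation is a monotonicity: since $\pi_{Z}$ of a set is by definition the union of the pointwise projections, $x\in\sigma$ and $y\in\tau$ force $\pi_{Z}(x)\subseteq\pi_{Z}(\sigma)$ and $\pi_{Z}(y)\subseteq\pi_{Z}(\tau)$, hence
\[
d_{Z}(x,y)=\mathrm{diam}_{C(Z)}\bigl(\pi_{Z}(x)\cup\pi_{Z}(y)\bigr)\leq \mathrm{diam}_{C(Z)}\bigl(\pi_{Z}(\sigma)\cup\pi_{Z}(\tau)\bigr)=d_{Z}(\sigma,\tau),
\]
for every annular or non-annular $Z\subseteq S$. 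Applying the threshold $[\,\cdot\,]_{n}$ preserves this term by term, so summing and invoking Theorem \ref{marking} followed by Corollary \ref{E3} yields
\[
\sum_{Z}[d_{Z}(x,y)]_{n}+\sum_{A}\log[d_{A}(x,y)]_{n}\;\leq\;\sum_{Z}[d_{Z}(\sigma,\tau)]_{n}+\sum_{A}\log[d_{A}(\sigma,\tau)]_{n}\;\asymp\;\log i(\sigma,\tau)\;\prec\;\log i(x,y),
\]
which is exactly the desired bound.

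When $x,y$ do not fill $S$ we have $d_{S}(x,y)\leq 2$. If $d_{S}(x,y)\leq 1$, Lemma \ref{oct} gives $d_{Z}(x,y)\leq 2$ for every $Z$ and the entire right-hand side is killed once $N\geq 2$. If $d_{S}(x,y)=2$, pick a common neighbor $v$; for every $Z$ with $\pi_{Z}(v)\neq\emptyset$, two applications of Lemma \ref{oct} through $v$ give $d_{Z}(x,y)\leq 4$, and for the remaining $Z$, $v$ lies in $\partial Z$ or is disjoint from $Z$, so $Z$ is contained in a component of $S\setminus v$ of strictly smaller complexity on which $x,y$ live and we finish by induction on $\x(S)$. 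Thus the genuine content is the filling case, and the only real obstacle is recognizing the set-valued monotonicity $d_{Z}(x,y)\leq d_{Z}(\sigma,\tau)$; once this is in place the theorem reduces to a single chain of inequalities built on top of Corollary \ref{E3} and Theorem \ref{marking}.
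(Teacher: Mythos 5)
Your proof of the filling case is essentially identical to the paper's: complete $x,y$ into markings $\sigma\ni x,\tau\ni y$ via Corollary~\ref{E3}, use the set-valued monotonicity $\pi_{Z}(x)\subseteq\pi_{Z}(\sigma)$ to get $[d_{Z}(x,y)]_{n}\leq[d_{Z}(\sigma,\tau)]_{n}$ (and likewise for annuli), then feed this through Theorem~\ref{marking}. One small correction in your chain: since Theorem~\ref{marking} is stated with cutoff $N_{0}$ and you have $n\geq N_{0}$, the middle comparison should read $\sum[d_{Z}(\sigma,\tau)]_{n}\leq\sum[d_{Z}(\sigma,\tau)]_{N_{0}}\prec\log i(\sigma,\tau)$, i.e.\ a one-sided $\prec$, not $\asymp$; for the bound you want this is harmless.

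The non-filling case is where you diverge from the paper. You split on $d_{S}(x,y)\leq 1$ versus $d_{S}(x,y)=2$, pick a common neighbor $v$, handle subsurfaces meeting $v$ by Lemma~\ref{oct}, and push the rest into a component of $S\setminus v$ by induction on $\xi(S)$. This can be made to work (a curve disjoint from $v$ and essentially intersecting $Z\subseteq S'$ does lie in the single component $S'$, so both $x,y$ live there), but it costs you an induction with base cases and a uniformization of constants across subsurface types. The paper's route is more direct: take the filling subsurface $F(x,y)$, run the filling-case argument there to control $\sum_{Z\subseteq F(x,y)}[\,\cdot\,]_{n}$, and then observe that any $W\not\subseteq F(x,y)$ receiving nontrivial projections from both $x$ and $y$ must meet $\partial F(x,y)$, so two applications of Lemma~\ref{oct} give $d_{W}(x,y)\leq 4$ and such $W$ are killed once $N\geq 4$. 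No induction is needed, and the constant is transparently uniform. Finally, note that you set $N=N_{0}$ at the outset but later implicitly require $N\geq 4$ (and $N\geq 2$); you should take $N=\max(N_{0},4)$ once and for all, as the paper does.
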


\begin{proof}
\underline{\textbf{If $x$ and $y$ fill $S$:}}
By Corollary \ref{E3}, there exist markings $\sigma$ and $\tau$ such that $x\in \sigma, y\in \tau$, and $\log i(\sigma, \tau)\prec \log i(x,y).$ We have 
\begin{align}
   \sum_{Z\subseteq S}[ d_{Z}(x,y)]_{n}+\sum_{A\subseteq S} \log [d_{A}(x,y)]_{n} &\leq \sum_{Z\subseteq S}[ d_{Z}(\sigma,\tau)]_{n}+\sum_{A\subseteq S} \log [d_{A}(\sigma,\tau)]_{n} \tag*{}\\
   &\leq \sum_{Z\subseteq S}[ d_{Z}(\sigma,\tau)]_{N}+\sum_{A\subseteq S} \log [d_{A}(\sigma,\tau)]_{N} \tag{Since $n\geq N$}\\
& \prec \log i(\sigma, \tau) \tag{By Theorem \ref{marking}}\\
              & \prec \log i(x,y). \tag{By Corollary \ref{E3}}
\end{align}


\underline{\textbf{If $x$ and $y$ do not fill $S$:}}
We take $F(x,y)\subset S$, then $x$ and $y$ fill $F(x,y)$. By the same argument in the previous case, we have $$\sum_{Z\subseteq F(x,y)}[ d_{Z}(x,y)]_{n}+\sum_{A\subseteq F(x,y)} \log [d_{A}(x,y)]_{n} \prec \log i(x,y).$$ 
We note that $Z$ and $A$ on the above formula need to range over the whole surface for the statement of this theorem. However, if $W\subseteq S$ such that $W \nsubseteq F(x,y)$, $\pi_{W}(x)\neq \emptyset$ and $\pi_{W}(y) \neq \emptyset$, then by Lemma \ref{oct} we have $$d_{W}(x,y)\leq d_{W}(x,\p(F(x,y)))  +d_{W}(\p(F(x,y)),y)\leq 2+2.$$ 
By taking $N\geq 4$ if necessary, we have $[ d_{W}(x,y)]_{n}=0.$

\end{proof}

In the rest of this paper, $N$ will denote the constant given by Theorem \ref{E}.


\section{The proofs} 
We prove Theorem \ref{E'} and Theorem \ref{E''} in a coarse inequality setting. 

\begin{theorem}
Let $x,y\in C(S)$ and $g_{x,y} = \{v_{i}\}$ be a tight geodesic such that $d_{S}(x,v_{i}) = i$ for all $i$. We have
$\log i(v_{p},v_{q}) \prec \log i(x,y)$ for all $p,q$. 

\end{theorem}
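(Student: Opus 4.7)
The plan is to apply Theorem \ref{E} to the inner pair $(v_p, v_q)$ with a suitably large cut-off, and then use Lemma \ref{sss} to transfer each nonzero subsurface projection term to a corresponding term for the outer pair $(x,y)$. Concretely, fix a cut-off $n \geq 2M + N + 1$, where $M$ is the constant from Theorem \ref{BGIT} and $N$ is the constant from Theorem \ref{E}. Then Theorem \ref{E} gives
\[
\log i(v_p, v_q) \;\asymp\; \sum_{Z \subseteq S} [d_Z(v_p, v_q)]_n + \sum_{A \subseteq S} \log [d_A(v_p, v_q)]_n,
\]
and the task reduces to bounding the right hand side by $\log i(x,y)$.

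Consider any subsurface contributing a nonzero summand. The case $Z = S$ is trivial since $d_S(v_p, v_q) = |p-q| \leq d_S(x,y)$. Otherwise $Z \subsetneq S$, $\pi_Z(v_p)$ and $\pi_Z(v_q)$ are nonempty, and $d_Z(v_p, v_q) > n > M$. Assuming without loss of generality $q > p$, Lemma \ref{sss} yields $d_Z(x, v_p) \leq M$ and $d_Z(v_q, y) \leq M$, and in particular $\pi_Z(x)$ and $\pi_Z(y)$ are nonempty (as the filling argument internal to the proof of Lemma \ref{sss} provides). The triangle inequality then gives
\[
d_Z(x,y) \;\geq\; d_Z(v_p, v_q) - 2M \;>\; n - 2M \;>\; N,
\]
so $[d_Z(x,y)]_N = d_Z(x,y)$ and consequently $[d_Z(v_p, v_q)]_n \leq [d_Z(x,y)]_N + 2M$. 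The same argument works verbatim for annular $A$, yielding $\log[d_A(v_p, v_q)]_n \leq \log\bigl([d_A(x,y)]_N + 2M\bigr) \leq \log[d_A(x,y)]_N + O(1)$, provided $N$ is chosen large enough that $[d_A(x,y)]_N > 2M$ whenever it is nonzero.

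Summing these term-by-term inequalities, every nonzero summand on the $(v_p, v_q)$ side is paired with a nonzero summand on the $(x,y)$ side, so the $2M$ and $O(1)$ additive errors are absorbed into a uniform multiplicative constant. Invoking Theorem \ref{E} in the opposite direction for the pair $(x,y)$, we conclude
\[
\log i(v_p, v_q) \;\prec\; \sum_{Z \subseteq S} [d_Z(x,y)]_N + \sum_{A \subseteq S} \log[d_A(x,y)]_N \;\asymp\; \log i(x,y),
\]
which is the desired estimate. The principal obstacle is the delicate interplay between the two cut-offs: we need $n$ large enough so that Lemma \ref{sss} fires on every surviving summand, yet we need the resulting $d_Z(x,y)$ to exceed $N$ so that it counts on the outer side. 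This is exactly why Theorem \ref{E} was proved with freedom in the cut-off (any $n \geq N$) rather than at a single fixed threshold.
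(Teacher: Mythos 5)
Your proof is correct and follows essentially the same route as the paper: apply Theorem \ref{E} to the inner pair $(v_p,v_q)$ with a large cut-off, use Lemma \ref{sss} plus the triangle inequality to transfer each surviving subsurface term to the pair $(x,y)$, and apply Theorem \ref{E} again in the reverse direction. The only cosmetic difference is that the paper arranges $k$ so that $k \le 2(k-2M)$ and $k \le (k-2M)^2$, converting the additive $2M$ loss into a clean factor of $2$ term-by-term, whereas you carry the additive $2M$ (resp.\ $O(1)$ in the log terms) and observe afterward that it is absorbed multiplicatively because every surviving $(x,y)$-term exceeds the cut-off $N$; these are equivalent bookkeeping choices.
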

\begin{proof} 
The proof is the combination of Lemma \ref{sss} and Theorem \ref{E}. 
Assume $p<q$. Take $k$ such that $k\geq N+2M$.
If $W$ is a proper subsurface such that $[d_{W}(v_{p},v_{q})]_{k}>0$, then $d_{W}(x, v_{p})\leq M$ $d_{W}(v_{q},y)\leq M$ by Lemma \ref{sss}. Therefore, we have $$ d_{W}(x,y)\geq d_{W}(v_{p}, v_{q})- d_{W}(x,v_{p})-d_{W}(v_{q},y)\geq d_{W}(v_{p}, v_{q})-2M ;$$ in particular we have $[d_{W}(x,y)]_{k-2M}>0$. By taking larger $k$ if necessary, so that $k\leq 2\cdot (k-2M)$ and $k\leq (k-2M)^{2}$, we have 
\begin{itemize}
\item $[d_{W}(v_{p},v_{q})]_{k}\leq 2\cdot [d_{W}(x,y)]_{k-2M}.$
\item $\log [d_{W}(v_{p},v_{q})]_{k}\leq 2\cdot \log [d_{W}(x,y)]_{k-2M}. $
\end{itemize}
Furthermore, since we clearly have $[d_{S}(v_{p},v_{q})]_{k} \leq [d_{S}(x,y)]_{k-2M}$; all together we obtain $$\displaystyle \sum_{Z\subseteq S}[ d_{Z}(v_{p},v_{q})]_{k}+\sum_{A\subseteq S} \log [d_{A}(v_{p},v_{q})]_{k} \leq 2 \cdot \bigg(\sum_{Z\subseteq S}[ d_{Z}(x,y)]_{k-2M}+\sum_{A\subseteq S} \log [d_{A}(x,y)]_{k-2M}\bigg).$$

Lastly, by our choice of $k\geq N+2M$, we can apply Theorem \ref{E} to the above, and we have 
\begin{align}
   \log i(v_{p},v_{q})& \prec \sum_{Z\subseteq S}[ d_{Z}(v_{p},v_{q})]_{k}+\sum_{A\subseteq S} \log [d_{A}(v_{p},v_{q})]_{k} \tag*{}\\
   	&  \leq  2 \cdot \bigg(\sum_{Z\subseteq S}[ d_{Z}(x,y)]_{k-2M}+\sum_{A\subseteq S} \log [d_{A}(x,y)]_{k-2M}\bigg) \tag*{}\\
	&\prec \log i(x,y).\tag*{}
                 \end{align}

\end{proof}

By using a similar technique in the proof of the above theorem, we show the following theorem. Note that we will not require a geodesic in the statement to be tight.

\begin{theorem}
Let $x,y\in C(S)$ and $g_{x,y} = \{v_{i}\}$ be a geodesic such that $d_{S}(x,v_{i}) = i$ for all $i$. We have $ \log i(x,y)\prec  \log \big( i(x,v_{p})\cdot i(v_{p},y)\cdot i(x,v_{q})\cdot i(v_{q},y)\big)$ for all $p,q$ such that $|p-q|>2$. (We let $i(x,v_{1})=1$ and $i(v_{d_{S}(x,y)-1},y)=1$.)
\end{theorem}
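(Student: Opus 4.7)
The plan is to combine Theorem \ref{E} with a filling-based triangle inequality for subsurface projections, in the same spirit as the previous theorem but exploiting the hypothesis $|p-q|>2$ in place of tightness.

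First I would establish the geometric claim: for every proper subsurface $W\subsetneq S$, at least one of $\pi_W(v_p)$ and $\pi_W(v_q)$ is non-empty. Since $|p-q|\geq 3$, the pair $\{v_p,v_q\}$ fills $S$. If both projections were empty, then each of $v_p,v_q$ is disjoint from the interior of $W$, hence either disjoint from $\overline{W}$ or a boundary curve of $W$ (or the core, in the annular case). In the non-annular case: if some $v_*$ is a boundary curve of $W$, then filling forces another component of $\partial W$ to intersect the other $v_*$, giving a non-empty projection and contradicting the assumption; if both are disjoint from $\overline{W}$, then $\partial W$ is disjoint from both $v_p$ and $v_q$, again contradicting filling. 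The annular case is even simpler: in every sub-case one quickly obtains $d_S(v_p,v_q)\leq 1$, contradicting $|p-q|\geq 3$.

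Given this claim, for any proper $W$ with $\pi_W(x),\pi_W(y)\neq \emptyset$, the triangle inequality in $C(W)$ applied through whichever of $v_p,v_q$ has non-empty projection gives
\[
d_W(x,y) \leq d_W(x,v_p)+d_W(v_p,y)+d_W(x,v_q)+d_W(v_q,y),
\]
where any undefined term is interpreted as $0$; the same inequality holds trivially for $W=S$. A standard cut-off manipulation then yields, for $n$ sufficiently large compared to $N$,
\[
[d_W(x,y)]_n \leq 4\sum_{(\alpha,\beta)}[d_W(\alpha,\beta)]_{\lfloor n/4\rfloor}
\qquad \text{and} \qquad
\log[d_A(x,y)]_n \prec \sum_{(\alpha,\beta)} \log[d_A(\alpha,\beta)]_{\lfloor n/4\rfloor},
\]
where $(\alpha,\beta)$ ranges over the four pairs $(x,v_p),(v_p,y),(x,v_q),(v_q,y)$.

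Finally, applying Theorem \ref{E} on the left with cut-off $n$, summing the above over all non-annular $W$ and annular $A$, and applying Theorem \ref{E} again to each of the four pairs with cut-off $\lfloor n/4\rfloor$ yields
\[
\log i(x,y) \prec \log i(x,v_p)+\log i(v_p,y)+\log i(x,v_q)+\log i(v_q,y),
\]
which is the desired inequality. The convention $i(x,v_1)=i(v_{d_S(x,y)-1},y)=1$ is invoked only in the edge cases $p=1$ or $q=d_S(x,y)-1$, where the corresponding pair is at curve-graph distance $1$ and contributes at most a uniform additive constant on the right by Lemma \ref{oct}; the convention merely prevents a spurious $\log 0$. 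The main obstacle is verifying the filling-based claim, which in particular must carefully handle the annular case and the possibility that $v_p$ or $v_q$ is a boundary component of $W$; after that, the argument is a routine bookkeeping exercise combining Theorem \ref{E} with matching cut-off constants.
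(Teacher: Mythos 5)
Your proposal takes essentially the same route as the paper: use the fact that $|p-q|>2$ makes $\{v_p,v_q\}$ fill $S$ so that every proper subsurface to which $x$ and $y$ project nontrivially also receives a nontrivial projection from at least one of $v_p,v_q$, then run the triangle inequality in each such subsurface, manipulate cut-offs, and close the loop with Theorem~\ref{E} on both sides; the paper organizes this via the sets $\W^p$ and $\W^q$ and uses cut-off $\lfloor k/2\rfloor$ rather than your $\lfloor n/4\rfloor$, but that is only bookkeeping. One small slip: in your annular case, if $W$ is an annulus with core $\gamma$ and both $\pi_W(v_p),\pi_W(v_q)$ were empty, the correct contradiction is that $\gamma$ would be disjoint from both $v_p$ and $v_q$ and so would not be filled by them, not that $d_S(v_p,v_q)\leq 1$ (two curves disjoint from $\gamma$ can still intersect each other); the conclusion you need is right, but the stated justification isn't.
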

\begin{proof} 

Let $v_{i}\in g_{x,y}$; we define $\W^{i}:=\{W\subseteq S| \pi_{W}(x)\neq \emptyset, \pi_{W}(y)\neq \emptyset, \pi_{W}(v_{i})\neq \emptyset \}.$

Take $k\geq 2\cdot N$ and let $l =\big\lfloor \frac{k}{2} \big\rfloor $.
Let $W\in \W^{p}$. If $[d_{W}(x,y)]_{k}>0$ then $[d_{W}(x,v_{p})]_{l}>0 \text{ or }[d_{W}(v_{p},y)]_{l}>0.$ Therefore, taking larger $k$ if necessary, we have 
\begin{itemize}
\item $[d_{W}(x,y)]_{k}\leq 2\cdot \big( [d_{W}(x,v_{p})]_{l} +[d_{W}(v_{p},y)]_{l}\big).$
\item $\log [d_{W}(x,y)]_{k}\leq 2\cdot \big(\log [d_{W}(x,v_{p})]_{l} +\log[d_{W}(v_{p},y)]_{l}\big).$
\end{itemize}
Thus, we have 
\begin{eqnarray*}
\sum_{Z \in \W^{p} }[ d_{Z}(x,y)]_{k}+\sum_{A \in \W^{p} } \log [d_{A}(x,y)]_{k}&\leq& 2\cdot \bigg(\sum_{Z \in \W^{p}}[ d_{Z}(x,v_{p})]_{l}+\sum_{A \in \W^{p} } \log [d_{A}(x,v_{p})]_{l} \bigg)\\&+&2\cdot \bigg(\sum_{Z \in \W^{p} }[ d_{Z}(v_{p},y)]_{l}+\sum_{A \in \W^{p}  } \log [d_{A}(v_{p},y)]_{l} \bigg).
\end{eqnarray*}

Lastly, we notice that every subsurface of $S$, to which $x$ and $y$ project nontrivially, is contained in $\W^{p} \cup \W^{q}$ because $v_{p}$ and $v_{q}$ fill $S$.
We repeat the same argument on $\W^{q}$, and combining with the above observation on $\W^{p}$, we have
\begin{eqnarray*}
\sum_{Z \subseteq S  }[ d_{Z}(x,y)]_{k}+\sum_{A \subseteq S } \log [d_{A}(x,y)]_{k} &\leq& 2\cdot \bigg(\sum_{Z\subseteq S }[ d_{Z}(x,v_{p})]_{l}+\sum_{A\subseteq S } \log [d_{A}(x,v_{p})]_{l} \bigg)\\&+&2\cdot \bigg(\sum_{Z \subseteq S }[ d_{Z}(v_{p},y)]_{l}+\sum_{A  \subseteq S } \log [d_{A}(v_{p},y)]_{l} \bigg)\\&+&2\cdot \bigg(\sum_{Z \subseteq S}[ d_{Z}(x,v_{q})]_{l}+\sum_{A\subseteq S } \log [d_{A}(x,v_{q})]_{l} \bigg)\\&+&2\cdot \bigg(\sum_{Z \subseteq S }[ d_{Z}(v_{q},y)]_{l}+\sum_{A \subseteq S  } \log [d_{A}(v_{q},y)]_{l} \bigg).
\end{eqnarray*}

Since $l =\big\lfloor \frac{k}{2} \big\rfloor \geq N$, we can apply Theorem \ref{E} to the above to obtain $$ \log i(x,y)\prec  \log i(x,v_{p})+ \log i(v_{p},y)+\log i(x,v_{q})+\log i(v_{q},y).$$
\end{proof}

\begin{remark}\label{hkai}
We let $i(x,v_{1})=1$ even though $i(x,v_{1})=0$ (Similarly, we let $i(v_{d_{S}(x,y)-1},y)=1$.) in the statement of the above theorem because if $W\in \W^{1}$ then $[d_{W}(x,v_{1})]_{l} \leq [2]_{l}=0$ by Lemma \ref{oct}, i.e., we have $$\sum_{Z \in \W^{1} }[ d_{Z}(x,y)]_{k}+\sum_{A \in \W^{1} } \log [d_{A}(x,y)]_{k} \leq 2\cdot \bigg(\sum_{Z \in \W^{1} }[ d_{Z}(v_{1},y)]_{l}+\sum_{A \in \W^{1}  } \log [d_{A}(v_{1},y)]_{l} \bigg).$$
\end{remark}



\bibliographystyle{plain}
\bibliography{references.bib}

\end{document}